\setlist[description]{%
  style=unboxed,
  leftmargin=32pt,
}
\patchcmd{\subsection}{-.5em}{.5em}{}{}
\newcounter{results}[section]
\theoremstyle{plain}
\newtheorem{theorem}[results]{Theorem}
\newtheorem{lemma}[results]{Lemma}
\newtheorem{proposition}[results]{Proposition}
\theoremstyle{remark}
\theoremstyle{definition}
\numberwithin{equation}{section}
\newcommand{\R}{{\mathbb R}}
\newcommand{\N}{{\mathbb N}}
\newcommand{\la}{\langle}
\newcommand{\ra}{\rangle}
\newcommand{\de}{{\rm d}}
\newcommand{\haz}{\widehat}
\newcommand{\weakto}{\rightharpoonup}
\newcommand{\Span}{\mathrm{span}}
\newcommand{\zz}{{\mbox{\boldmath$z$}}}
\newcommand{\xx}{{\boldsymbol x}} 
\newcommand{\ddelta}{\boldsymbol{\delta}}
\newcommand{\epsi}{\varepsilon}
\renewcommand{\ne}{{\nu\epsi}}
\title[Time-fractional doubly nonlinear equation]{Global well-posedness for a time-fractional \\doubly
  nonlinear equation}
\author{Goro Akagi}
\address[Goro Akagi]{Mathematical Institute and Graduate School of Science, Tohoku University, Sendai 980-8578, Japan}
\email{goro.akagi@tohoku.ac.jp}
\author{Giacomo Enrico Sodini}
\address[Giacomo Enrico Sodini]{Faculty of Mathematics, University of Vienna, Oskar-Morgenstern-Platz 1, A-1090 Vienna, Austria}
\email{giacomo.sodini@univie.ac.at}
\author{Ulisse Stefanelli}
\address[Ulisse Stefanelli]{Faculty of Mathematics, University of
  Vienna, Oskar-Morgenstern-Platz 1, A-1090 Vienna, Austria \& Vienna Research Platform on Accelerating Photoreaction Discovery, University of Vienna, W\"ahringerstrasse 17, A-1090 Wien, Austria.}
\email{ulisse.stefanelli@univie.ac.at}
\subjclass[2020]{Primary: 35K55}
 \keywords{Fractional doubly nonlinear equation, approximation,
   existence, uniqueness}
\begin{document}

\begin{abstract} We consider a time-fractional parabolic equation of doubly
  nonlinear type, featuring nonlinear terms both
  inside and outside the differential operator in time. The main
  nonlinearities are maximal monotone graphs, without restrictions on
  the growth.
In addition, a Lipschitz continuous perturbation is
considered. The existence of global weak solutions is obtained via a
regularization and Galerkin approximation method.  Uniqueness is also discussed under some additional assumptions.
\end{abstract}

\maketitle

\section{Introduction}\label{sec:intro}

This paper is concerned with the nonlinear time-fractional parabolic
equation
\begin{equation}
  \label{eq:0}
  \partial^\theta_t (\alpha(u) - \alpha(u_0)) - \Delta u + \beta(u)\ni g(x,t,u).
\end{equation}
Here, the symbol $\partial^\theta_t$ stands for the {\it Riemann--Liouville
  fractional derivative} of order $\theta\in (0,1)$ which is defined by
$$\partial^\theta_t f(t):=\frac{1}{\Gamma(1{-}\theta)}\frac{\de}{\de
  s} \int_0^t (t{-}s)^{-\theta}f(s)\, \de t\quad \forall t>0$$
where $\Gamma$ is the Euler  $\Gamma$-function. Hence if $\alpha(u)
|_{t = 0} = \alpha(u_0)$, then the Riemann--Liouville derivative
$\partial^\theta_t (\alpha(u) - \alpha(u_0))$ coincides of the {\it Caputo derivative} of $\alpha(u)$ of order $\theta \in (0,1)$ in a weak sense. 

Equation \eqref{eq:0} is posed in the space-time cylinder $Q:=\Omega
\times (0,T)$, where $T>0$ is a final time and $\Omega$ is a nonempty,
open, connected, bounded  set  of $\R^n$ with Lipschitz boundary
$\partial \Omega$. The nonlinearities $\alpha, \, \beta : \R\to 2^\R
$ are maximal monotone graphs, with no growth restrictions, whereas $g(x,t,\cdot)$ is a Lipschitz continuous function. The initial value
$\alpha(u_0)$ is given and the  equation  is complemented by
homogeneous Dirichlet boundary conditions.

The occurrence of nonlinearities both inside and outside the
differential operator in time $\partial_t^\theta$ qualifies equation
\eqref{eq:0} as of {\it doubly nonlinear} type. Doubly nonlinear
problems occur frequently in connection with applications, especially
in relation to nonlinear diffusion phenomena. In the nonfractional
case, for different choices of the map $\alpha $, equation
\eqref{eq:0} may arise in connection to the two-phase Stefan problem
(for $\alpha={\rm id} +H$, where $H$ is the Heaviside graph), the
porous-medium equation (for $\alpha(u)=|u|^{p -2}u$ for some $p \in
(1,2)$) and the Hele--Shaw model (for $\alpha=H$), see~\cite{Visintin96} for a detailed
discussion.

Also motivated by their relevance in applications, doubly nonlinear
problems are a mainstay in evolution-equation theory. In the
classical, nonfractional case, existence results for doubly nonlinear
equations can be traced back  to Grange--Mignot~\cite{Grange-Mignot72}.
Besides the classical references by {Barbu}~\cite{B-dn},
{DiBenedetto--Showalter}~\cite{DiBenedetto-Show81},  {Alt--Luckhaus}~\cite{Alt-Luckhaus}, and {Bernis}~\cite{Bernis88}, one may also
refer to 
\cite{Aizicovici-Hokkanen04b,Aizicovici-Hokkanen04,Akagi06,Akagi11,Gajewsky-Skrypnik04,Hokkanen91,Hokkanen92b,Hokkanen92,MW-dn,vol,fico,ganzo},
among many others. See
also~\cite{Akagi14,be} for some global variational approaches  and~\cite{Akagi-Schimperna,Boegelein,Huaroto,Kato,Moring} for a collection of recent results.

To the best of our knowledge, existence for time-fractional parabolic
doubly nonlinear equations has not been proved yet. This note is
intended to fill this gap. Our main result is the well-posedness of a
variational formulation of equation \eqref{eq:0}, where the elliptic
operator is considered in the classical weak sense. Note that, in case
$\alpha$ is linear, existence of solutions to equation \eqref{eq:0}
has already been obtained in~\cite{Akagi19}, see also 
\cite{LiS23,Akagi24}. In fact, the specific case of porous media
has already been considered in~\cite{Ploci1,Ploci2,Ploci3}. In
addition, a number of results on long-time behavior and decay is
available, see~\cite{Affili,Dipierro,Vergara1,Vergara2}, where
nonetheless existence of solutions is just assumed. Furthermore,
in~\cite{BII}, the asymptotic behavior of solutions for such time-fractional porous medium equations has also been studied in detail.  The reader is also referred to~\cite{Giga,Topp} for some alternative approaches based on viscosity solutions.

To prove existence of a solution to \eqref{eq:0}
we proceed by regularization and
approximation. The maximal monotone graphs $\alpha$ and
$\beta$ are replaced by their Yosida approximations and $\alpha$ is made strongly
monotone by adding a multiple of the identity  to it.  Such  a  regularized problem is then tackled by a Galerkin discretization
method. A suite of a-priori estimates allows the passage to the limit,
first in the Galerkin approximation and then in the regularization,
bringing to the existence proof. 
In the classical nonfractional case, a pivotal tool in the analysis of
doubly nonlinear problems driven by  subdifferential  operators is the availability of a local chain-rule equality, which
eventually allows for obtaining a-priori estimates. In the time-fractional
setting,  the local chain-rule equality is replaced by a nonlocal
chain-rule inequality, which makes the analysis more
challenging.

Uniqueness is generally not to be expected for doubly nonlinear
problems, see~\cite{DiBenedetto-Show81}. We are nonetheless in the position of providing  a 
uniqueness result under some stronger assumptions, namely, for $\alpha$ strongly monotone and $\beta$ 
Lipschitz continuous. 

The plan of the paper is as follows. We present our setting and
state the well-posedness result, i.e., Theorem \ref{thm:main}, in
Section \ref{sec:main1}. The uniqueness of solutions is proved in
Section \ref{sec:uniqueness}. The existence proof is then presented in
Section \ref{sec:existence}. A regularization  and Galerkin
approximation of the problem is introduced in Sections 
 \ref{sec:regularization}--\ref{sec:galerkin}. We then prove a-priori
estimates and pass to the
limit in the Galerkin approximation in Sections
 \ref{sec:estimates}--\ref{sec:limit}. Additional estimates  for  the
regularized problem are obtained in Section \ref{sec:estimates2} and
we eventually remove the regularization by a limit passage in Sections
 \ref{sec:limit2}--\ref{sec:limit3}.

\section{Main result}
\label{sec:main1}

We devote this section to the statement of the main well-posedness result, namely
Theorem \ref{thm:main}.
Let us start by introducing some notation. 

\subsection{Functional analytic framework} 
\label{sec:main11}

Recall that we ask for
\begin{description}
 \item[{\rm (A0)}] Let $\Omega\subset \R^n$ be a nonempty bounded Lipschitz domain, and let $T\in (0,\infty)$.
\end{description}
In the following, we set $ Q:=\Omega\times (0,T) $ and we use the spaces
$$H:=L^2(\Omega), \quad V:=H^1_0(\Omega), \quad V^*:=H^{-1}(\Omega)$$
where $V^*$ is the dual of $V$, so that $(V,H,V^*)$ is a classical
Hilbert triplet. Note that the choice of the space $V$ corresponds to the case
of homogeneous Dirichlet boundary conditions. Other boundary
conditions can also be treated, by redefining $V$ to be a closed
subset of $H^1(\Omega)$ containing $H^1_0(\Omega)$. 
The symbol
$(\cdot,\cdot)$ stands for the standard scalar product in $H$ whereas
$\la\cdot,\cdot\ra$ is the duality pairing between $V^*$ and $V$. We
denote by $\|\cdot\|$ the norm in $H$, by
$\| \cdot \|_E$ the norm in the generic Banach space $E$, and 
by $c_V>0$ the norm of the injection $V\subset H$, namely, $\| u\|
\leq c_V \| u \|_V$ for all $u \in V$.  We note that $c_V$ is given as $c_V = \lambda_1(\Omega)^{-1/2}$, where $\lambda_1(\Omega)$ stands for the principal eigenvalue of the Dirichlet Laplacian in $\Omega$.  We set $A: V
\to V^*$ to be the Riesz mapping given by
$$\la Au,v \ra:=\int_\Omega \nabla u(x) \cdot \nabla v(x)\, \de x
\quad \forall  u,  v \in V.$$
In particular,  it holds  $\| u \|_V^2=\la Au,u\ra$ for all $u\in V$.

\subsection{Maximal monotone operators}
\label{sec:main12}

Given any maximal monotone graph $\gamma: \R \to 2^\R$ with $0\in \gamma(0)$,  for all $ r  \in D(\gamma):=\{ r  \in \R \ : \ \gamma( r )\not =\emptyset\}$ we
define by $\gamma^\circ ( r )$ the point in
the closed interval $\gamma( r )$ which is closest to $0$. One    can
uniquely find  (see e.g.~\cite[Example~2.8.1, p.~43]{Brezis73}) a  proper, lower semicontinuous and convex function $\haz \gamma: D(\haz \gamma)\subset \R\to [0,\infty]$ such that $\gamma =
\partial \haz \gamma$, where $0=\haz \gamma (0)=\min \haz \gamma$ and $\partial$ denotes the subdifferential in the sense
of convex analysis~\cite{Brezis73}. The symbol $D(\haz \gamma)$
indicates the  \emph{effective domain}  of $\haz \gamma$, namely, $D(\haz \gamma):=\{r\in \R \: :
\: \haz \gamma(r) <\infty\}$ .  
In particular,
we have that $\haz \gamma \geq 0$, as well as $\haz \gamma^*\geq 0$,
where $\haz \gamma^*$ is the Legendre--Fenchel conjugate of $\haz
\gamma$, namely $\haz \gamma^*(r)  := \sup_{s  \in \R} (rs-\haz \gamma(s))$ for all $ r \in
\R$. The Fenchel identity $\haz \gamma(s) + \haz \gamma^*(r) =
rs$ for all $r \in \gamma(s)$ or, equivalently, all $s\in
 \partial \haz \gamma^*(r)$ in particular ensures that 
 $\gamma^{-1} =  \partial \haz \gamma^*$.

We indicate by
$\psi_\gamma:H\to [0,\infty]$ the convex, proper, and lower
semicontinuous functional given by
\begin{equation}\label{eq:psia0}
    \psi_{\gamma}(v):= \begin{cases} \displaystyle\int_{\Omega} \haz{\gamma}(v(x))
      \,\de x \quad &\text{if} \  \haz \gamma \circ v \in L^1(\Omega), \\[2mm]
     \infty \quad &\text{else}\end{cases}
 \end{equation}
 for $v \in H$.  Moreover, we set $\Psi_\gamma:L^2(0,T;H) \to [0,\infty]$ to be
 \begin{equation}\label{eq:psia}
    \Psi_{\gamma}(v):= \begin{cases} \displaystyle\int_0^T
      \psi_\gamma(v) \, \de t = \int_0^T\!\!\int_{\Omega} \haz{\gamma}(v(x,t))
      \,\de x\, \de t \quad &\text{if} \  \haz \gamma \circ v \in L^1(Q), \\[2mm]
     \infty \quad &\text{else}\end{cases}
 \end{equation}
 for $v \in L^2(0,T;H)$.  
 
Let us consider the subdifferential $\gamma_H:=\partial \psi_\gamma:H \to 2^{H}$ and recall that
 $v\in \gamma_H (u)$  if and only if  $u\in  D(\psi_\gamma)$ and 
\[ (v,w-u) \leq \psi_\gamma(w) - \psi_\gamma(u) \quad \forall w \in H. \] 
In fact, the subdifferential $\gamma_H$ is characterized 
(cf.~\cite[Proposition~2.16, p.~47]{Brezis73}) as,  for $u,v \in H$,   
$$v\in \gamma_H(u) \ \ \Leftrightarrow \ \ 
v(x)\in \gamma(u(x)) \ \ \text{for a.e.}\ x \in \Omega$$
with domain $ D(\gamma_H)=\{u \in H \: : \: \exists v \in H
\ \text{with} \ v(x)\in \gamma(u(x)) \ \text{for a.e.}\ x \in
\Omega\}$.  In the following, for any $u \in D(\gamma_H)$ we use
the symbol  $ \gamma_H^\circ(u)$ to indicate the element of minimal
norm in the nonempty, convex, and closed set $\gamma_H(u)$. 
One can also consider the subdifferential $\partial
\Psi_\gamma:L^2(0,T;H)\to 2^{L^2(0,T;H)}$ and check that
$D(\partial \Psi_\gamma) = \{u \in L^2(0,T;H) \ : \ \exists v \in
L^2(0,T;H) \ \text{with} \ v \in \gamma
(u) \ \text{a.e.~in} \ Q \}$ and that, for all $u
\in D(\Psi_\gamma)$ one has 
$\partial \Psi_\gamma (u) = \{v \in L^2(0,T;H): \ v \in \gamma
(u) \ \text{a.e.~in} \ Q\}$.


\subsection{Convergence of maximal monotone operators and convolution}
\label{sec:main13}

 Given  a sequence of maximal monotone maps $(\gamma_h)_h$
for $h>0$ with $0\in
\gamma_h(0)$,  one  says that $\partial \Psi_{\gamma_h} \to \partial \Psi_\gamma$ in the {\it
  graph sense in $L^2(0,T;H)$} as $h \to 0$ if, for all $u,\, v \in L^2(0,T;H)$ with $v\in \partial\Psi_{\gamma}(u)$, one can
find two sequences $(u_h)_h,\, (v_h)_h$  in  $L^2(0,T;H)$ with $v_h \in
\partial \Psi_{\gamma_h}(u_h)$ such that $u_h\to u$ and $v_h\to v$ in
$ L^2(0,T;H)$~\cite[Definition~3.58, p.~360]{Attouch}. A straightforward extension of~\cite[Proposition~3.59, p.~361]{Attouch} guarantees that
\begin{align}
  &\partial\Psi_{\gamma_h}\to \partial \Psi_\gamma \ \text{in the graph
    sense in $L^2(0,T;H)$}, \nonumber\\[2mm]
  &  v_h \in \partial \Psi_{\gamma_h}(u_h), \
  u_h \weakto u \ \text{in} \ L^2(0,T;H),  \,  v_h\weakto v \ \text{in} \ L^2(0,T;H), \nonumber\\
  &   \text{and} \ \limsup_{h \to 0}\int_0^T(v_h,u_h)\, \de t \leq
    \int_0^T(v,u)\, \de t \nonumber\\
  &\quad \Rightarrow \ v \in \partial \Psi_\gamma(u)
    \ \text{and}  \ \int_0^T(v_h,u_h)\, \de t\to \int_0^T(v,u)\, \de t.\label{eq:attouch}
\end{align}
Under the assumption  $0\in
\gamma_h(0)$, the graph convergence $\partial \Psi_{\gamma_h} \to \partial \Psi_{\gamma}$ is
equivalent to the {\it Mosco convergence} $\Psi_{\gamma_h}\to \Psi_\gamma
$  in $L^2(0,T;H)$  of the corresponding functionals~\cite[Theorem~3.66, p.~373]{Attouch},
namely, to the following  two  conditions 
\begin{align}
&\Psi_{\gamma}(u) \leq \liminf_{h\to 0} \Psi_{\gamma_h}
  (u_h)\quad \forall u_h \weakto u \ \text{in} \  L^2(0,T;H), \
                 \text{and}\label{eq:liminf}\\
  &\forall y \in L^2(0,T;H) \ \exists y_h \to y\ \text{in} \  L^2(0,T;H) \
    \text{such that} \ \Psi_{\gamma_h}
  (y_h)\to \Psi_{\gamma}(y).\label{eq:limsup}
\end{align}

Given any $a\in L^p(0,T)$ and $b \in L^q(0,T)$ (or $b \in L^q(0,T;E)$
with $E$ being a Banach space) with $1/p+1/q=1+1/r$ for some  $1 \le p,q,r \le \infty $, in the following, we indicate the standard convolution product on $(0,t)$ as
$$(a\ast b)(t)  :  = \int_0^t a(t-s)\,b(s)\, \de s$$
and we make use of Young's convolution inequality
\begin{equation}\label{eq:young}
\| a \ast b \|_{L^r(0,t)} \le \|a\|_{L^p(0,t)} \,\|b\|_{L^q(0,t)}
\quad \forall t \in [0,T].
\end{equation}


\subsection{Setting of the problem}
\label{sec:main14}

Our setting is specified as follows:
\begin{description}\setlength{\itemsep}{3mm}
 \item[{\rm (A1)}]  $\ell, \, \kappa: (0,T) \to \R$, $\ell, \, \kappa \in L^1(0,T)$  are nonnegative and nonincreasing such that $\ell \ast \kappa \equiv 1$.
 \item[{\rm (A2)}] $\alpha =\partial \haz \alpha: \R \to 2^\R$ is maximal monotone with a strictly convex potential $\haz \alpha$.
 \item[{\rm (A3)}] $\beta:\R \to 2^\R$ is maximal monotone.
 \item[{\rm (A4)}] $g:\Omega\times(0,T)\times \R \to \R$ is a
   Carath\'eodory function, and moreover, there exist  $ q  >2$ and
    $\Lambda_g > 0$  such that
\begin{align*}
& \quad |g(x,t,u_1)  - g(x,t,u_2)|\leq \Lambda_g|u_1-u_2| \quad \forall u_1,\, u_2\in \R, \ \text{a.e.} \ (x,t)\in Q,\\
  & \quad  |g(x,t,u)| \leq \Lambda_g(1+u^{2/ q })  \quad \forall u \in \R, \
    \text{a.e.} \ (x,t)\in Q.
\end{align*}
 \item[{\rm (A5)}] $v_0 \in \alpha_H(u_0)\cap  L^{2 q -2}(\Omega)$
    for some $u_0 \in D(\alpha_H) \cap D(\beta_H)$  with
   $\beta_H^\circ (u_0) \in L^{2 q -2}(\Omega)$,  where $q > 2$ is given in (\ref{ass_lip}).
\end{description}

{
\color{white}
\begin{align*}
\tag{A0}\label{Azero}\\[-10mm]
\tag{A1}\label{ciao}\\[-10mm]
\tag{A2}\label{ass2}\\[-10mm]
\tag{A3}\label{ass3}\\[-10mm]
\tag{A4}\label{ass_lip}\\[-10mm]
\tag{A5}\label{eq:init}
\end{align*}
}

Assumption \eqref{ciao} 
covers the case of the
Riemann--Liouville fractional derivative of order
$\theta \in (0,1)$, corresponding to the
choices
$$
\kappa (t) = \frac{t^{-\theta}}{\Gamma(1{-}\theta)} \ \ \text{and}
\ \ \ell(t) = \frac{t^{\theta-1}}{\Gamma(\theta)} \quad \forall t \in (0,T).
$$
 Then $\kappa$ is also convex. 

Note that no growth conditions are imposed on $\alpha$ and
$\beta$ in \eqref{ass2}--\eqref{ass3}. Correspondingly, both maximal monotone operators $\alpha_H$
and $\beta_H$ are possibly
unbounded. Moreover, $\beta_H$ can be degenerate.
 
Assumption \eqref{eq:init} entails that $ D(\alpha_H)\cap
D(\beta_H)\not= \emptyset$.  Without loss of generality, in the
following, we additionally assume that
$$0\in \alpha (0) \ \ \text{and}  \ \ 0\in \beta (0).$$
 Indeed, this  can be achieved by considering the
shifted graphs
$\tilde \alpha(r) = \alpha(r  +  r^0) -  \alpha^0$ and $\tilde \beta(r) = \beta(r  +  r_0) - \beta^0$ for some $r^0\in D(\alpha)\cap
D(\beta)$, $\alpha^0 \in \alpha(r^0)$, and $\beta^0 \in \beta(r^0)$,
as well as the shifted Lipschitz function $\tilde g(x,t,r) = g(x,t,r + r_0)  -\beta^0$.

Given assumption \eqref{ass_lip} we can
define a mapping $G:L^2(0,T;H) \to L^2(0,T;H)$ setting $G(u)(x,t)  : =
g(x,t,u(x,t))$ for all $u\in L^2(0,T;H) $ and a.e.~$(x,t)\in
Q$. Indeed, for all such $u$ the
function $g(x,t,u(x,t))$ is measurable in $Q$ and
$$|g(x,t,u(x,t))| \leq |g(x,t, 0)| + \Lambda_g|u(x,t)| \leq  \Lambda_g(1+|u(x,t)|) \quad \text{for a.e.} \
(x,t)\in Q$$ 
where we also used the sublinearity bound in (A4) to control $
|g(x,t, 0)| \leq \Lambda_g$. 
This  ensures  that 
$G(u)\in L^2(0,T;H)$, as well as
\begin{equation}
  \label{eq:Gbound}
  \|G(u)(t)\|\leq  \Lambda_g(|\Omega|^{1/2}+\|u(t)\|)   \quad \forall u \in
  L^2(0,T;H)\quad \text{for a.e.} \ t \in (0,T).
\end{equation}
Moreover, for all $u_1,\, u_2 \in L^2(0,T,H) $ we have that 
\begin{equation}
  \label{ass_lip0}\| G(u_1)(t)- G(u_2)(t)\|\leq \Lambda_g \| u_1(t)-u_2(t)\| \quad \text{for a.e.} \ t\in (0,T)
  \end{equation}
and the sublinearity assumption in \eqref{ass_lip} ensures, with an
application of Young's inequality, that 
\begin{equation}
  \label{ass_lip2}
  (G(u),u) \leq \frac{1}{2c_V^2}\|u\|^2 + C_G \quad
  \forall u \in L^2(0,T;H), \ \text{a.e. in} \ (0,T)
\end{equation} 
 for $C_G>0$ given by
$$
C_G :=   c_V^2\Lambda_g^2|\Omega| +
  \frac{ q -2}{2 q} \Lambda_g^{\frac{ 2q }{ q -2}}\left(\frac{ q }{2(2+ q )c_V^2}\right)^{-\frac{ q + 2}{ q - 2 }}|\Omega|.
$$


As  $(x,t)\mapsto g(x,t,0)\in L^\infty(Q)$,  one can find a  measurable 
set $J\subset  (0,T)$  of full measure, i.e., $|(0,T) \setminus J| =
0$,  such that 
$x \mapsto
g(x,t, 0 )\in  L^\infty(\Omega)$  for all $t \in J$.
We may now define $\mathfrak{g}: (0,T)  \times H \to H$ by $\mathfrak{g}(t,v)(x) = g(x,t,v(x))$ for $t\in J$ and
$\mathfrak{g}(t,v)=0$ for $t \in  (0,T)  \setminus J$  for $v \in H$.  Moreover,  since $(x,t)\mapsto g(x,t,v(x))\in L^2(Q)$, one  observes  that $x\mapsto g(x,t,v(x))$ is measurable in $\Omega$, for all
$t\in (0,T)$. Moreover,
$$|\mathfrak{g}(t,v)(x)|\leq |g(x,t, 0 )| + \Lambda_g|v(x)| 
\quad \text{for a.e.} \ x \in \Omega, \ \forall t \in  (0,T)$$
which implies that $\mathfrak{g}(t,v)\in H$ for all $t\in (0,T)$ and that the map
$t\in (0,T)\mapsto \mathfrak{g}(t,v)\in H$ is strongly
measurable.  Eventually, 
\begin{equation}
  \label{eq:ass_lip3}
\|\mathfrak{g}(t,v_1) - \mathfrak{g}(t,v_2)\| \leq \Lambda_g \| v_1  - v_2 \|
\quad \forall v_1,\, v_2\in H, \ \forall t\in (0,T).
\end{equation}

 Let $X$ be a Banach space and let $1 \leq p \leq \infty$. Define a map $\mathcal B : D(\mathcal B) \subset L^p(0,T;X) \to L^p(0,T;X)$ by $u \mapsto \mathcal{B}(u) := \partial_t (\kappa*u)$ for $u \in D(\mathcal B) := \{u \in L^p(0,T;X) \colon \kappa*u \in W^{1,p}(0,T;X), \  (\kappa*u)  (0) = 0\}$. Then $\mathcal B$ is linear and $m$-accretive in $L^p(0,T;X)$ (see, e.g.,~\cite{Za09,Akagi19}). 
In the following, we use the nonlocal-in-time chain-rule inequality from
\cite[Proposition~4.2.~ii]{Akagi24}:  if $\gamma :\R \to 2^\R$ is a maximal monotone map,  for all $u_0
\in D(\psi_\gamma)$, 
$u \in L^2(0,T,H)$ with $\psi_\gamma(u)\in L^1(0,T)$ and $\kappa \ast
(u-u_0)\in W^{1,2}(0,T;H)$  satisfying $(\kappa * (u- u_0))(0) = 0$,  and
$z\in L^2(0,T,H)$ with $z \in \gamma_H(u)$ a.e.~in $(0,T)$, one has
that
\begin{equation}
  \label{eq:chain}
  \Big[ \ell\ast \big(\partial_t (\kappa \ast (u - u_0)), z\big)\Big](t) \geq 
  \psi_\gamma(u(t)) - \psi_\gamma(u_0) \quad \text{for a.e.} \ t \in (0,T).
\end{equation}
If $ \gamma$ is  (single-valued and) Lipschitz continuous, a nonlocal
chain-rule inequality holds also  for  $u\in L^2(0,T;V)$  satisfying  $\kappa \ast (u-u_0)\in W^{1,2}(0,T;V^*)$  and $(\kappa * (u- u_0))(0) = 0$  only.  Then we see $\gamma(u) \in L^2(0,T;V)$.  We argue by approximation: Let $u_\delta \in
W^{1,2}(0,T;V)$ be such that $u_\delta \to u$ in $L^2(0,T;V)$ and $\kappa \ast
(u_\delta-u_0)\to \kappa \ast
(u-u_0)$ in $W^{1,2}(0,T;V^*)$ as $\delta\to 0$.  Moreover, we find that $(\kappa *(u_\delta -u_0))(0) = 0$.  By passing to the
$\liminf$ as $\delta \to 0$ in inequality
\eqref{eq:chain} written for $u_\delta$ and $z_\delta =
\gamma_H(u_\delta)$ a.e.~in $(0,T)$ we get 
\begin{equation}
  \label{eq:chain2}
   \Big[\ell \ast \big\langle \partial_t (\kappa \ast (u - u_0)), z \big\rangle \Big](t) \geq 
  \psi_\gamma(u(t)) - \psi_\gamma(u_0) \quad \text{for a.e.} \ t \in (0,T).
\end{equation}

We are now in the position of stating our main result.

\begin{theorem}[Well-posedness]\label{thm:main}   Under 
  assumptions \eqref{Azero}--\eqref{eq:init} there exists a triplet
$(u,v,w)\in  L^2(0,T;V\times H \times H)$ 
  such that  $\kappa \ast (v - v_0) \in W^{1,2}(0,T; V^*)$, $(\kappa *(v - v_0))(0) = 0$  and 
  \begin{align}
    &\partial_t (\kappa \ast (v - v_0) )
       + A u + w =G(u) \quad \text{in} 
      \ V^*, \, \text{a.e.~in} \ (0,T), 
    \label{eq:prob5}\\
    & v \in \alpha(u) \quad \text{a.e.~in} \ Q, 
    \label{eq:prob2}\\
    &w  \in  \beta(u)\quad \text{a.e.~in}   \  Q.
    \label{eq:prob4}
  \end{align}
In addition, if $ \kappa$ is convex,  $\alpha$ is strongly monotone and $\beta$ is {\rm (}single-valued and{\rm )} Lipschitz continuous, such $u$ is unique.
\end{theorem}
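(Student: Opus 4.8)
The plan is to carry out the regularization-and-Galerkin scheme announced in the introduction in order to produce the triplet $(u,v,w)$, and then to establish uniqueness separately under the strengthened hypotheses.

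\textbf{Regularization and Galerkin solvability.} First I would replace the graphs by their Yosida approximations. For $\epsi,\lambda>0$ set $\alpha_{\epsi\lambda}:=\alpha_\lambda+\epsi\,\mathrm{id}$, with $\alpha_\lambda$ the Yosida approximation of $\alpha$, so that $\alpha_{\epsi\lambda}$ is Lipschitz and strongly monotone, and let $\beta_\mu$ be the Yosida approximation of $\beta$. The regularized equation reads $\partial_t(\kappa\ast(v_\epsi-v_0^\epsi))+Au_\epsi+w_\epsi=G(u_\epsi)$ with $v_\epsi=\alpha_{\epsi\lambda}(u_\epsi)$ and $w_\epsi=\beta_\mu(u_\epsi)$, where $v_0^\epsi$ approximates $v_0$ compatibly with the regularized graph. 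Using the orthonormal basis $\{e_j\}$ of $H$ of eigenfunctions of $A$, I would seek $u_m(t)=\sum_{j=1}^m c_j(t)e_j$ solving the projected system. Since the operator $u\mapsto\partial_t(\kappa\ast u)$ is $m$-accretive and invertible with $\mathcal B^{-1}=\ell\ast(\cdot)$ (by $\ell\ast\kappa\equiv1$ from \eqref{ciao}), the projected equations are equivalent to a Volterra integral system with Lipschitz nonlinearities, solvable on $[0,T]$ by a fixed-point argument together with the a-priori bounds below.

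\textbf{A-priori estimates.} The key tool is the nonlocal chain rule \eqref{eq:chain}. Convolving the projected equation with $\ell$ and testing with $u_m$, the chain rule applied to the convex potential of $\alpha_{\epsi\lambda}$ bounds the dissipation from below by $\psi_{\alpha_{\epsi\lambda}}(u_m(t))$, while $\la Au_m,u_m\ra=\|u_m\|_V^2$ yields coercivity and \eqref{ass_lip2} absorbs the contribution of $G$ into the $H$-norm. A Gronwall argument adapted to the kernel then gives bounds, uniform in $m$, for $u_m$ in $L^2(0,T;V)$, for $v_m$ and $w_m$ in $L^2(0,T;H)$, and, reading them off from the equation, for $\partial_t(\kappa\ast(v_m-v_0^\epsi))$ in $L^2(0,T;V^*)$.

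\textbf{Passage to the limit.} Letting $m\to\infty$ at fixed $\epsi,\lambda,\mu$, weak compactness together with an Aubin--Lions-type argument adapted to the nonlocal-in-time operator identifies the Lipschitz nonlinearities and yields a solution $(u_\epsi,v_\epsi,w_\epsi)$ of the regularized problem, with the regularity $\kappa\ast(v_\epsi-v_0^\epsi)\in W^{1,2}(0,T;V^*)$ and the initial condition $(\kappa\ast(v_\epsi-v_0^\epsi))(0)=0$ preserved in the limit. The additional estimates announced for Section \ref{sec:estimates2}, exploiting the higher integrability of $v_0$ in \eqref{eq:init}, then provide bounds uniform in $\epsi,\lambda,\mu$; sending these to zero I would pass to the limit via the graph-/Mosco-convergence machinery \eqref{eq:attouch}--\eqref{eq:limsup}, recovering $v\in\alpha(u)$ and $w\in\beta(u)$ a.e.~in $Q$, hence \eqref{eq:prob5}--\eqref{eq:prob4}. \textbf{The main obstacle is precisely the verification} of the hypothesis $\limsup_{\epsi}\int_0^T(v_\epsi,u_\epsi)\,\de t\le\int_0^T(v,u)\,\de t$ required in \eqref{eq:attouch}: in the nonfractional case this would follow from an energy equality, but here \eqref{eq:chain} is only an inequality, so one must extract the needed upper bound by combining the limit equation tested against $u$ with \eqref{eq:chain} and the weak lower semicontinuity of the remaining terms.

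\textbf{Uniqueness.} Assume $\kappa$ convex, $\alpha$ strongly monotone with constant $\rho>0$, and $\beta$ single-valued Lipschitz. Given two solutions, I would subtract \eqref{eq:prob5} and test the difference with $u_1-u_2$. The elliptic term contributes $\|u_1-u_2\|_V^2\ge0$, strong monotonicity gives the pointwise bound $(v_1-v_2,u_1-u_2)\ge\rho\|u_1-u_2\|^2$, and the Lipschitz bounds \eqref{eq:ass_lip3} on $\beta$ and $G$ control the remaining terms by a multiple of $\|u_1-u_2\|^2$. The nonlocal time term is handled through the convexity of $\kappa$, which furnishes a Vergara--Zacher-type coercivity inequality for $\partial_t(\kappa\ast(v_1-v_2))$; the delicate point is that the convolution acts on $v_1-v_2$ whereas the test function is $u_1-u_2$, a mismatch bridged by the strong monotonicity linking $v_i$ and $u_i$. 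Convolving the resulting inequality with $\ell$ and using $\ell\in L^1(0,T)$ together with Young's inequality \eqref{eq:young}, the estimate closes on a short interval $[0,t_0]$ where $\|\ell\|_{L^1(0,t_0)}$ is small, forcing $u_1=u_2$ there; iterating over $[0,T]$ yields $u_1=u_2$, hence $v_1=v_2$ by strong monotonicity and $w_1=w_2$.
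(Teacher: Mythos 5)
Your overall architecture — Yosida regularization of both graphs, a strongly monotone shift of $\alpha$, Galerkin discretization solved through a Volterra reformulation with $\ell\ast(\cdot)$, chain-rule-based a-priori bounds, graph/Mosco identification of the limits, and a short-interval iteration for uniqueness — is the paper's plan. However, at each of the two points you yourself flag as delicate, your proposed resolution does not work, and the paper does something different.

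First, the existence limit. You propose to verify the hypothesis $\limsup_{\epsi}\int_0^T(v_\epsi,u_\epsi)\,\de t\le\int_0^T(v,u)\,\de t$ of \eqref{eq:attouch} by ``combining the limit equation tested against $u$ with \eqref{eq:chain}.'' This is circular: the chain-rule inequality \eqref{eq:chain} for the limit pair requires $u\in\alpha^{-1}(v)$ a.e., i.e., exactly the inclusion $v\in\alpha(u)$ you are trying to establish, so it cannot be invoked before the identification. The paper avoids Minty-type reasoning altogether: from the uniform $L^2(0,T;V^*)$ bound on $\partial_t(\kappa\ast(v_{\ne}-v_{0\ne}))$ it derives, writing $v_{\ne}$ as $\ell$ convolved with that derivative and splitting the translated kernel, an equicontinuity estimate for time translates of $v_{\ne}$ in $V^*$; Aubin--Lions then yields $v_{\ne}\to v$ \emph{strongly} in $L^2(0,T;V^*)$, which paired with $u_{\ne}\weakto u$ in $L^2(0,T;V)$ gives actual convergence of the duality product (see \eqref{eq:tolie}), feeding \eqref{eq:attouch}. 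Two further ingredients you gloss over are genuinely load-bearing: the $L^q(Q)$ bound on $w$ comes from testing with $|w_{\ne}|^{q-2}w_{\ne}$ and a chain rule for the \emph{composed} potential $\haz{\beta_{\epsi q}\circ\alpha_{\ne}^{-1}}$, which is where the higher integrability in \eqref{eq:init} enters and why the initial data must be approximated by the specific pair $(u_{0\epsi},v_{0\epsi})$ of Lemma \ref{L:regu_data} (nontrivial when $\alpha$ is multivalued, since $\alpha_\epsi(u_0)$ would only recover the minimal section $\alpha^\circ(u_0)$, not the given $v_0$); and the identification $w\in\beta(u)$ in the final limit needs strong convergence of $u_\nu$ in $L^\sigma(0,T;H)$, obtained from the strict convexity of $\haz\alpha$ in \eqref{ass2} via Visintin's theorem — an ingredient absent from your sketch.

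Second, uniqueness. You test the difference equation with $\tilde u=u_1-u_2$ first and then appeal to a ``Vergara--Zacher-type coercivity inequality'' for $\partial_t(\kappa\ast\tilde v)$, with the $v$--$u$ mismatch ``bridged by strong monotonicity.'' No such inequality is available: $\tilde v$ and $\tilde u$ satisfy no graph relation (differences of solutions do not), so neither \eqref{eq:chain} nor any fundamental-identity coercivity applies to the pairing $(\partial_t(\kappa\ast\tilde v),\tilde u)$, and convolving with $\ell$ \emph{after} testing produces $\ell\ast(\partial_t(\kappa\ast\tilde v),\tilde u)$, which is not $(\tilde v,\tilde u)$. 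The paper's fix is the opposite order of operations: convolve the equation itself with $\ell$ \emph{before} testing, so that $\ell\ast\kappa\equiv1$ and $(\kappa\ast\tilde v)(0)=0$ collapse the fractional term exactly to $\tilde v$ (see \eqref{eq:usala}); then testing with $\tilde u$ makes strong monotonicity give $C_\alpha\|\tilde u\|^2\le(\tilde v,\tilde u)$ pointwise in time, while $\int_0^t\la\ell\ast A\tilde u,\tilde u\ra\,\de s\ge0$ because the convexity of $\kappa$ ensures $\ell$ is of positive type. After this, your smallness of $\|\ell\|_{L^1(0,\tau)}$ and the iteration over $[j\tau,(j+1)\tau]$ is indeed the paper's scheme, except that restarting the estimate on later subintervals requires the truncated-kernel argument ($\tilde\ell_t$ in the paper) to replace $\ell$ by its restriction near the origin once $\tilde u$ is known to vanish on the earlier interval.
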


The remainder of the paper is devoted to  giving a  proof of Theorem
 \ref{thm:main}. At first, we check uniqueness in Section
 \ref{sec:uniqueness}. Existence is then proved in
Section \ref{sec:existence} via an approximation and Galerkin discretization approach.

\section{Proof of Theorem \ref{thm:main}: uniqueness}\label{sec:uniqueness}

For the purpose of proving uniqueness,  $\kappa$ is assumed to be convex and  $\alpha$ and $\beta$ are assumed to be strongly monotone
and (single-valued and) Lipschitz continuous, respectively.  That is,  let
$C_\alpha>0$ and $\Lambda_\beta >0$ be such that  
\begin{align*}
C_\alpha|r_1-r_2|^2 &\leq (y_1-y_2)(r_1-r_2)\quad \forall
    y_i \in \alpha(r_i), \  i=1,2,\\
|z_1 - z_2| &\leq \Lambda_\beta|r_1-r_2|\quad \forall
    z_i  =  \beta(r_i), \  i=1,2.
  \end{align*}

As $\ell\in L^1(0,T)$, we can find $m\in {\mathbb N}$ large enough so
that  the number $\tau := T/m$ satisfies
  \begin{equation}
    \label{eq:L1}
    \| \ell\|_{L^1(0,\tau)}\leq
    \frac{C_\alpha}{\sqrt{2}(\Lambda_\beta + \Lambda_g)}.
  \end{equation}

Let $(u_i, v_i, w_i)$ for $i=1,2$ solve
\eqref{eq:prob5}--\eqref{eq:prob4} in the sense of Theorem \ref{thm:main} and define $\tilde{u}:=u_1-u_2$,
$\tilde{v}:=v_1-v_2$,
$\tilde{w}:=w_1- w_2$, and $\tilde G:=G(u_1)-G(u_2)$. 
Writing
\eqref{eq:prob5} for $(u_i, v_i, w_i)$, for $i=1,2$, in place of $(u, v, w)$ and
taking the difference of the two resulting equations we get  
    \[ \partial_t (\kappa\ast \tilde{v})+ A\tilde{u}   +
      \tilde{w}=\tilde G \quad \text{in} \  V^*, \ 
      \text{a.e.~in} \ (0,T).\]
Convolving with $\ell$ and using that $\ell\ast \kappa
     \equiv 1$, see \eqref{ciao}, one obtains
\begin{equation} \tilde{v}  +\ell \ast A\tilde{u} + \ell \ast
  \tilde{w}=\ell\ast \tilde G \quad  \text{in} \  V^*, \ 
  \text{a.e.~in} \ (0,T).\label{eq:usala}
\end{equation}

Test \eqref{eq:usala} by $\tilde{u}$ and integrate on $ (0,t)$ for $t \in
[0, T ]$. Using the strong monotonicity of $\alpha$ 
we get
\begin{align} &C_\alpha \int_0^t \|\tilde{u}\|^2 \,\de s  \le \int_0^t ( \tilde{v},\tilde{u}) \,\de s=\int_0^t \la \tilde{v},\tilde{u}\ra\, \de s \nonumber\\
& \quad \stackrel{\eqref{eq:usala}}{=}  -\int_0^t \la\ell \ast      A\tilde{u},  \tilde{u}\ra\,\de s-\int_0^t (\ell\ast\tilde{w},\tilde{u})\,\de s+\int_0^t(\ell\ast\tilde  G,\tilde  u)\,      \de s.\label{eq:rhs}\end{align}
  Since, in addition to (A0), we assumed that $ \kappa$ is convex, then,  as $\ell$ is  of positive type (see~\cite[p.~500, Proposition 3.1]{GLS}),  the first term in the above
right-hand side is  nonnegative,  namely,
$$\int_0^t\la \ell \ast A\tilde u, \tilde u\ra \, \de s =
\int_0^t\!\!\int_\Omega (\ell\ast \nabla \tilde u)\cdot \nabla \tilde
u \,\, \de x \,\de s \geq 0.$$ 
The second and third terms in the right-hand side of \eqref{eq:rhs}
can be controlled as follows
\begin{align*}
&  -\int_0^t            (\ell  \ast  \tilde{w},\tilde{u})\, \de  s +      \int_0^t(\ell\ast\tilde G,\tilde  u)\, \de   s\le \int_0^t \|\ell \ast \tilde{w}\|\|\tilde{u}\|
        \,\de s +  \int_0^t \|\ell\ast\tilde G \|\|\tilde{u}\|
        \,\de s \\
& \quad\le \int_0^t (\ell \ast \|\tilde{w}\|) \|\tilde{u}\| \,\de
                     s+\int_0^t (\ell \ast \|\tilde{G}\|)
                     \|\tilde{u}\| \,\de s \le (\Lambda_\beta
                     +\Lambda_g)\int_0^t (\ell \ast \|\tilde{u}\|)
                     \|\tilde{u}\| \,\de s \\
  &\quad \leq \frac{C_\alpha}{2}\int_0^t\| \tilde u\|^2\, \de s +
    \frac{(\Lambda_\beta+\Lambda_g)^2}{2C_\alpha}\int_0^t (\ell \ast\|
     \tilde u \|)^2 \, \de s,\nonumber
\end{align*}
 where we have used  the Lipschitz continuity of $\beta$ and $G$.  Therefore 
\eqref{eq:rhs} reads, 
\begin{equation}\label{eq:general}
\frac{C_\alpha}{2}\int_{0}^t \| \tilde u \|^2 \, \de s \leq \frac{(\Lambda_\beta+\Lambda_g)^2}{2C_\alpha}\int_0^t (\ell \ast\|
     \tilde u \|)^2 \, \de s.
\end{equation}
Now, if $t=\tau$, we can use  the Young convolution inequality \eqref{eq:young} and inequality \eqref{eq:L1}  in order to  obtain 
\[ \frac{C_\alpha}{2}\int_{0}^t \| \tilde u \|^2 \, \de s \leq 
    \frac{(\Lambda_\beta+\Lambda_g)^2}{2C_\alpha}\|\ell\|_{L^1(0,\tau)}^2 \int_0^t
                                                                                                                                       \|\tilde{u}\|^2
                                                                                                                                       \de
                                                                                                                                       s
    \stackrel{\eqref{eq:L1}}{\leq}\frac{ C_\alpha}{4}\int_0^t
                                                                                                                                       \|\tilde{u}\|^2
                                                                                                                                       \de
                                                                                                                                       s
                                                                                                                                       .
\]
We deduce that $\tilde u=0$ a.e.~in $\Omega \times
(0,\tau)$.

We now prove that, if $\tilde u=0$  a.e.~in $\Omega \times
(0,j\tau)$ for some $j=1,\dots,m-1$, one also has that $\tilde u=0$  a.e.~in $\Omega \times
(0,(j+1)\tau)$. This will imply that $\tilde u=0$  a.e.~in $Q$.  From \eqref{eq:general} with $t \in [j \tau, (j+1)\tau]$ and
 the fact that $\tilde u=0$  a.e.~in $\Omega \times
(0,j\tau)$ we get that
\begin{equation}
  \frac{C_\alpha}{2}\int_{j\tau}^t \| \tilde u \|^2 \, \de s \leq
\frac{(\Lambda_\beta + \Lambda_g)^2}{2C_\alpha}\int_{j\tau}^t (\ell
\ast \| \tilde u \|)^2 \, \de s.\label{eq:uni}
\end{equation}
Let $\tilde \ell_{ t } :(0, t )\to {\mathbb R}$ be defined as
$$\tilde \ell_{ t } (\sigma):=
\left\{
  \begin{array}{ll}
    \ell(\sigma)\quad &\text{for} \ \  0 < \sigma \leq t - j\tau,\\
                   0& \text{for} \ \  t - j\tau  < \sigma < t.
  \end{array}
\right.$$
Note that, for  $ j \tau < r < s \le t $  we have
$ 0 < s-r<t-j\tau\leq \tau$. This  in  particular entails that
\begin{equation}
  \ell(s-r)\|
      \tilde u(r)\|=\tilde \ell_{ t }(s-r)\|
      \tilde u(r)\| \quad \text{for} \     j \tau < r < s \le t    .\label{eq:s}
      \end{equation}
 One can use this fact in order to control the
  right-hand side of \eqref{eq:uni} as follows:
  \begin{align*}
\MoveEqLeft{
    \frac{(\Lambda_\beta + \Lambda_g)^2}{2C_\alpha}\int_{j\tau}^t (\ell
\ast \| u \|)^2 \, \de s  = \frac{(\Lambda_\beta + \Lambda_g)^2}{2C_\alpha}\int_{j\tau}^t\left(\int^s_{0 } \ell(s-r)\|
      \tilde u(r)\|\, \de r \right)^2 \de s
}\\
    &= \frac{(\Lambda_\beta + \Lambda_g)^2}{2C_\alpha}\int_{j\tau}^t\left(\int^s_{ j\tau } \ell(s-r)\|
      \tilde u(r)\|\, \de r \right)^2 \de s\\
    &\hspace{-1mm} \stackrel{\eqref{eq:s}}{=} \frac{(\Lambda_\beta + \Lambda_g)^2}{2C_\alpha}\int_{j\tau}^t\left(\int^s_{ j\tau }\tilde \ell_{ t }(s-r)\|
      \tilde u(r)\|\, \de r \right)^2\de s\\
    &= \frac{(\Lambda_\beta + \Lambda_g)^2}{2C_\alpha}\int_{0}^t\left(\int_0^s\tilde \ell_{ t }(s-r)\|
      \tilde u(r)\|\, \de r \right)^2\de s\nonumber  = \frac{(\Lambda_\beta + \Lambda_g)^2}{2C_\alpha} \left\| \tilde \ell_{t} * \|\tilde{u}\| \right\|_{L^2(0,t)}^2 \nonumber\\
    &\hspace{-1.4mm} \stackrel{\eqref{eq:young}}{\leq}\frac{(\Lambda_\beta + \Lambda_g)^2}{2C_\alpha}\|\tilde \ell_{ t }\|_{L^1(0,t)}^2\int_0^t \|
      \tilde u \|^2\, \de s= \frac{(\Lambda_\beta + \Lambda_g)^2}{2C_\alpha}\|
      \ell\|^2_{L^1(0,t-j\tau)}\int_{j\tau}^t \|
      \tilde u \|^2\, \de s \\
    &\leq \frac{(\Lambda_\beta + \Lambda_g)^2}{2C_\alpha}\|
      \ell\|^2_{L^1(0,\tau)}\int_{j\tau}^t \|
      \tilde u \|^2\, \de s \stackrel{\eqref{eq:L1}}{\leq} \frac{C_\alpha}4 \int_{j\tau}^t \|
      \tilde u \|^2\, \de s
  \end{align*}
 for $t \in [j\tau, (j+1)\tau]$.  In combination with \eqref{eq:uni}, this proves that $\tilde u=0$ a.e.~in $\Omega\times
  (j\tau,t)$ for all $t\in[ j\tau,(j+1)\tau ]$. We have hence checked
  that  $\tilde u=0$ a.e.~in $\Omega\times
  (0,(j+1)\tau)$ and the uniqueness assertion follows.

\section{Proof of Theorem \ref{thm:main}: existence}\label{sec:existence} 

As mentioned in Section \ref{sec:intro}, the proof of the existence of solutions to
\eqref{eq:prob5}--\eqref{eq:prob4} follows  from regularization  and
Galerkin approximation procedures.

For the reader's convenience, we
split the argument into subsections. 
At first, we regularize the maps
$\alpha$ and $\beta$ by passing to their Yosida approximations and by
adding a multiple of the identity  to $\alpha$  (Section
 \ref{sec:regularization}). The ensuing regularized problem is then
tackled via a Galerkin approximation (Section \ref{sec:galerkin}). After
establishing some a-priori estimates (Section \ref{sec:estimates}), we pass to the limit in the Galerkin
approximation (Section \ref{sec:limit}) and prove the well-posedness
of the regularized problem, namely, Proposition \ref{cor:reg}  below.  We then
obtain additional a-priori estimates  for  the solution to the
regularized problem (Section \ref{sec:estimates2}) and pass to the
limit in the regularizations (Sections
\ref{sec:limit2}--\ref{sec:limit3}), eventually proving Theorem \ref{thm:main}. 

\subsection{Regularization}\label{sec:regularization}

Given a parameter $\epsi  \in (0,1)$, we denote by
$$
\alpha_\epsi:= \frac{{\rm id}_\R  -({\rm id}_\R + \epsi\alpha)^{-1}}{\epsi},
\quad  \tilde \beta_\epsi  := \frac{{\rm id}_\R -({\rm id}_\R  + \epsi \beta)^{-1}}{\epsi},
$$
the Yosida approximations of the maximal monotone graphs $\alpha$ and
$\beta$ at level $\epsi$, respectively. Here, ${\rm id}_\R $ stands for the identity in
$\R$.  Moreover, we define the truncated  one by 
$$\beta_\epsi ( r ) = \min\{ \max\{\tilde \beta_\epsi( r ), - \epsi^{-1}\}, \epsi^{-1}\} \quad \forall  r 
\in \R.$$ 
Recall that $\alpha_{\epsi H}$
and $\beta_{\epsi H}$ are single-valued, $\epsi^{-1}$-Lipschitz
continuous  in $H$,  and that
$\alpha_{\epsi H}(0)=\beta_{\epsi H}(0)=0$ 
(cf.~\cite[Proposition~2.6 (i), p.~28]{Brezis73}). 

We additionally define
$$\alpha_\nu:=\nu \,{\rm id}_{ \R} + \alpha, \quad  \alpha_\ne:= \nu \,{\rm id}_{ \R} + \alpha_\epsi, \quad \eta_\ne :=
\alpha_\ne^{-1}$$
for all $\nu \in (0,1)$.
Note that $\alpha_{\nu\epsi H}$ is  monotone,  $(\nu{+}\epsi^{-1})$-Lipschitz continuous, and
coercive from $H$ to
$H$. Hence,  it is maximal monotone and onto. The mapping $\eta_{\ne H}$ is
$\nu^{-1}$-Lipschitz continuous  in $H$.  Moreover,  we have  $\alpha_{\ne
  H}(0)=\eta_{\ne H}(0)=0$.




Recall that $\alpha_\epsi(r) \to \alpha^\circ(r)$  and $|\alpha_\epsi(r)| \le |\alpha^\circ(r)|$ for all $r\in D(\alpha)$ and $\beta_\epsi(r)\to \beta^\circ(r)$ and $|\beta_\epsi(r)| \le |\beta^\circ(r)|$  for all $r\in D(\beta)$, where  we recall that  $\alpha^\circ(r)$ and $\beta^\circ(r)$  denote  the elements of smallest absolute
value in the
closed intervals $\alpha(r)$ and $\beta (r)$, respectively  (cf.~\cite[Proposition 2.6(iii)]{Brezis73}). Moreover,  we see that  $\haz{\alpha_\epsi} \nearrow \haz \alpha$, $\haz{\alpha_\ne} \nearrow \haz \alpha_\nu$, and
$\haz{\beta_\epsi} \nearrow \haz \beta$  pointwisely. 
Owing to~\cite[Theorem~3.20, p.~298]{Attouch} we have that $\Psi_{\alpha_\epsi}
\to \Psi_\alpha$, $\Psi_{\alpha_\ne}
\to \Psi_{\alpha_\nu}$, and $\Psi_{\beta_\epsi} \to \Psi_\beta$ in the Mosco
sense in $L^2(0,T;H)$. Hence,~\cite[Theorem~3.66, p.~373]{Attouch} implies that 
\begin{equation}
  \label{eq:attouch2}
  \partial \Psi_{\alpha_{\ne}} \to \partial \Psi_{\alpha_\nu} \quad
  \text{and}\quad \partial \Psi_{\beta_\epsi} \to \partial
  \Psi_{\beta}\quad \text{in the graph sense in}  \ L^2(0,T,H)
\end{equation}
 as $\epsi \to 0$.  Moreover,   one can easily check  that 
\begin{equation}
  \label{eq:attouch3}
  \partial \Psi_{\alpha_{\nu}} \to \partial \Psi_{\alpha}\quad  \text{in the graph sense in}  \ L^2(0,T,H)
\end{equation}
 as $\nu \to 0$. 

 We next introduce some approximation $(u_{0\epsi},v_{0\epsi})$ of
initial data $(u_0,v_0)$ satisfying (A5), for which we can check not
only  that  $\alpha_{\epsi H}(u_{0\epsi}) \to v_0$ in $H$ but
also  that  $\beta_{\epsi H}(u_{0\epsi})$ is bounded in $H$
as $\epsi \to 0$.  This approximation  will play a crucial role when $\alpha$ is not supposed to be single-valued (in  other words, when $\alpha$ is single-valued, i.e., $\alpha_H(u_0) = v_0$, the following construction is not necessary). 
Let $( s_j )_{j\in J}$  indicate  all values in  $D(\alpha)$ where  the value of  $\alpha$ is not a singleton. 
As $\alpha$ is monotone  in $\R$,  the index set $J$ is at most countable. Starting from $u_0 \in
D(\alpha_H)$  as in \eqref{eq:init}, for all $j\in J$ we define the measurable sets
$\Omega_j:=u_0^{-1}( s_j )$ and we set $ \hat{\Omega}:= \Omega
\setminus \cup_{j\in J}\Omega_j$. Note that these sets form a
disjoint partition of $\Omega$, up to null sets. We set
\begin{align}
  \label{eq:u0}
  &u_{0\epsi}:=
  \left\{
    \begin{array}{ll}
      u_0\quad&\text{on} \  \hat{\Omega},\\
       s_j +\epsi v_0&\text{on} \ \Omega_j, \ \forall j \in J ,\\
    \end{array}
  \right.\\
  & v_{0\epsi}:=\alpha_{\epsi H}(u_{0\epsi}),\label{eq:v0}
\end{align}
 where $v_0$ is as in \eqref{eq:init}.  
 We can now state several properties of the approximation of the initial data.

\begin{lemma}[Properties of the approximation of the initial data]\label{L:regu_data}
 Under assumptions \eqref{Azero}, \eqref{ass2}--\eqref{eq:init}, and positions \eqref{eq:u0}--\eqref{eq:v0}  we
have the following\/{\rm :}
\begin{enumerate}
 \item[\rm (i)] It holds that
\begin{equation}
  \label{eq:v0conv} v_{0\epsi}\to v_0 \ \ \text{in} \ H \ \ \text{and}
  \ \ \| v_{0\epsi}\| \leq \| v_0\|
\end{equation}
as $\epsi \to 0$.
 \item[\rm (ii)] It holds that
\begin{equation}\label{eq:u0conv}
   | u_{0\epsi} - u_0|\leq \epsi |v_0|\quad \text{a.e. in} \ \Omega, 
\end{equation}
which in particular yields
\begin{align}
 \| u_{0\epsi} - u_0\| &\leq \epsi \|v_0\|, \label{eq:u0conv2}\\
 |\beta_{\epsi H}(u_{0\epsi})| &\leq |\beta_{
   H}^\circ(u_0)| + |v_0|\quad \text{a.e. in} \ \Omega, \label{eq:beta0}
\end{align}
for any $\epsi  \in (0,1)$. 
\item[\rm (iii)] For all $\epsi \in (0,1)$ and $\nu \in (0,1)$, set
$$
v_{0\ne}:=\alpha_{\ne  H}(u_{0\epsi}) = \nu u_{0\epsi}+ v_{0\epsi},
$$
where $u_{0\epsi}$ and $v_{0\epsi}$ are defined in \eqref{eq:u0}--\eqref{eq:v0}.
Then, there exists a constant $C > 0$  depending on $\|u_0\|$ and
$\| v_0\|$ but independent of $\epsi$ and $\nu$  such that
    \begin{align*}
  \int_{\Omega}
  \haz{\eta_{\ne}}(v_{0\ne}) \,\de x
&\leq {\nu} \|u_{0\epsi}\|^2 +
        |(u_{0\epsi},v_{0\epsi})|\leq  C,
    \end{align*}
where $\haz{\eta_{\ne}} = \haz{\alpha_\ne}^*$.  This  in
particular implies  that  $v_{0\ne} \in D(\psi_{\eta_{\ne}})$, for any $\epsi,\nu \in (0,1)$.
 \item[\rm (iv)]  Let $q > 2$ be given as  in   {\rm (\ref{ass_lip})}.  For all $\epsi \in (0,1)$, setting $\beta_{\epsi  q}( r ):= |\beta_\epsi( r )|^{ q -2} \beta_\epsi( r ) $ for all $ r  \in \R$, it  holds that
$$
\int_\Omega \haz{\beta_{\epsi  q}\circ
  \alpha_{\ne}^{-1}}(v_{0\ne})\, \de x \leq  \tilde C   (\nu \|u_{0  \epsi } \| + \|v_{0\epsi}\|),
$$ 
  where $\tilde C>0$ depends on $ q $,
$\|\beta^\circ_H(u_0)\|_{L^{2 q -2} (\Omega)}$, and $\|v_0\|_{L^{2 q -2} (\Omega)}$ but is independent of $\epsi$  and $\nu$.  This  also yields that $v_{0\ne} \in D(\psi_{\beta_{\epsi  q} \circ \alpha_{\ne}^{-1}})$.
\end{enumerate} 
\end{lemma}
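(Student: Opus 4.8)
The plan is to establish the four items by a careful pointwise analysis of the approximations $u_{0\epsi}$ and $v_{0\epsi}$, reducing (iii) and (iv) to Fenchel--Young type inequalities combined with the pointwise bounds obtained in (i)--(ii).

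First I would prove (i) and (ii) together by splitting $\Omega$ into $\hat\Omega$ and the degeneracy sets $\Omega_j$. On $\hat\Omega$ we have $u_{0\epsi}=u_0$, and since $u_0(x)$ is a point of single-valuedness of $\alpha$ the inclusion $v_0\in\alpha_H(u_0)$ forces $v_0(x)=\alpha^\circ(u_0(x))$; the standard Yosida facts $\alpha_\epsi(r)\to\alpha^\circ(r)$ and $|\alpha_\epsi(r)|\le|\alpha^\circ(r)|$ then give $v_{0\epsi}\to v_0$ pointwise with $|v_{0\epsi}|\le|v_0|$ there. The key observation is on each $\Omega_j$: since $u_0=s_j$ there and $v_0(x)\in\alpha(s_j)$, the point $s_j+\epsi v_0(x)$ lies in $s_j+\epsi\alpha(s_j)$, so the resolvent $(\mathrm{id}_\R+\epsi\alpha)^{-1}$ sends it exactly back to $s_j$ and hence $\alpha_\epsi(s_j+\epsi v_0(x))=v_0(x)$. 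Thus $v_{0\epsi}=v_0$ identically on $\cup_j\Omega_j$. Combining the two regions gives $|v_{0\epsi}|\le|v_0|$ a.e.\ and $v_{0\epsi}\to v_0$ a.e., whence \eqref{eq:v0conv} follows by dominated convergence (dominating by $|v_0|^2\in L^1(\Omega)$, as $v_0\in L^{2q-2}(\Omega)\subset H$). The same splitting yields \eqref{eq:u0conv}, since $u_{0\epsi}-u_0$ vanishes on $\hat\Omega$ and equals $\epsi v_0$ on each $\Omega_j$; then \eqref{eq:u0conv2} is immediate, and \eqref{eq:beta0} follows from the $\epsi^{-1}$-Lipschitz continuity of $\beta_\epsi$ together with $|\beta_\epsi(u_0)|\le|\beta^\circ(u_0)|$, since $|\beta_\epsi(u_{0\epsi})-\beta_\epsi(u_0)|\le\epsi^{-1}|u_{0\epsi}-u_0|\le|v_0|$ by \eqref{eq:u0conv}.

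For (iii) I would use that $v_{0\ne}=\alpha_{\ne}(u_{0\epsi})=\partial\haz{\alpha_\ne}(u_{0\epsi})$, so the Fenchel identity yields $\haz{\eta_\ne}(v_{0\ne})=\haz{\alpha_\ne}^*(v_{0\ne})=u_{0\epsi}v_{0\ne}-\haz{\alpha_\ne}(u_{0\epsi})\le u_{0\epsi}v_{0\ne}$, where $\haz{\alpha_\ne}\ge0$ is used. Since $v_{0\ne}=\nu u_{0\epsi}+v_{0\epsi}$, integration over $\Omega$ produces the bound $\nu\|u_{0\epsi}\|^2+|(u_{0\epsi},v_{0\epsi})|$, which is controlled uniformly in $\epsi,\nu\in(0,1)$ by \eqref{eq:u0conv2} and \eqref{eq:v0conv}, giving the constant $C$. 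For (iv) the composition $\beta_{\epsi q}\circ\eta_\ne$ is a continuous nondecreasing real function vanishing at $0$, hence maximal monotone with a nonnegative convex $C^1$ potential $\haz{\beta_{\epsi q}\circ\eta_\ne}$ attaining its minimum at $0$. The elementary convexity inequality $\phi(r)\le r\phi'(r)$ then gives pointwise $\haz{\beta_{\epsi q}\circ\eta_\ne}(v_{0\ne})\le v_{0\ne}\,\beta_{\epsi q}(u_{0\epsi})=v_{0\ne}\,|\beta_\epsi(u_{0\epsi})|^{q-2}\beta_\epsi(u_{0\epsi})$, using $\eta_\ne(v_{0\ne})=u_{0\epsi}$. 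Writing $v_{0\ne}=\nu u_{0\epsi}+v_{0\epsi}$ and applying Cauchy--Schwarz after observing $\||\beta_\epsi(u_{0\epsi})|^{q-1}\|_{L^2(\Omega)}=\|\beta_\epsi(u_{0\epsi})\|_{L^{2q-2}(\Omega)}^{q-1}$ reduces the claim to a uniform bound on $\|\beta_\epsi(u_{0\epsi})\|_{L^{2q-2}(\Omega)}$; this is exactly supplied by \eqref{eq:beta0}, since $\beta_H^\circ(u_0),\,v_0\in L^{2q-2}(\Omega)$ by \eqref{eq:init}, and yields $\tilde C=(\|\beta_H^\circ(u_0)\|_{L^{2q-2}(\Omega)}+\|v_0\|_{L^{2q-2}(\Omega)})^{q-1}$.

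I expect the main subtlety to be the exact-matching identity $\alpha_\epsi(s_j+\epsi v_0)=v_0$ on the degeneracy sets $\Omega_j$, which is the whole reason for the particular construction \eqref{eq:u0}: it is what turns a mere convergence $\alpha_\epsi(u_0)\to v_0$ into the clean two-sided control needed for the uniform domination $|v_{0\epsi}|\le|v_0|$. Once this is in place the pointwise bounds propagate to (ii), and then (iii)--(iv) are Fenchel--Young estimates in which the $L^{2q-2}$ integrability of the data carries the essential weight in (iv); the only bookkeeping to watch is the exponent identity $\tfrac{q-1}{2q-2}=\tfrac12$ that makes the $L^{2q-2}$ norm appear on the nose.
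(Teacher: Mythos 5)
Your proposal is correct and follows essentially the same route as the paper: the same $\hat\Omega$/$\Omega_j$ splitting with the resolvent identity $({\rm id}_\R+\epsi\alpha)^{-1}(s_j+\epsi v_0)=s_j$ for (i)--(ii), the Fenchel identity plus $\haz{\alpha_\ne}\ge 0$ for (iii), and the convexity inequality $\haz{\gamma}(r)\le r\,\gamma(r)$ together with Cauchy--Schwarz and the $L^{2q-2}$ bound from \eqref{eq:beta0} for (iv). The only cosmetic difference is in the final constant of (iv), where you use Minkowski's inequality in $L^{2q-2}(\Omega)$ while the paper expands $(a+b)^{2q-2}\le 2^{2q-3}(a^{2q-2}+b^{2q-2})$; both give an admissible $\tilde C$.
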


\begin{proof}
We prove (i).  As $v_{0\epsi} = \alpha_\epsi(u_0)$ a.e.~on $ \hat{\Omega}$,   since there  $\alpha(u_0)$ is  singleton,  i.e., $\alpha(u_0) = \{v_0\} = \{\alpha^\circ(u_0)\}$,  one has that
$v_{0\epsi}\to \alpha^\circ(u_0)=v_0$  and $|v_{0\epsi}| \le  |\alpha^\circ(u_0)| =  |v_0|$ a.e.~on  $ \hat{\Omega}$. On the other hand, for all $j\in J$ one has
\begin{align*}
  &v_{0\epsi} = \alpha_\epsi ( s_j +\epsi v_0 ) = \frac{1}{\epsi}\left(
    ( s_j +\epsi v_0 )  - ({\rm id} + \epsi \alpha)^{-1}( s_j + \epsi v_0 
    )\right)=v_0 \quad \text{a.e.~on} \ \Omega_j,
\end{align*}
where we used that $ s_j +\epsi \alpha ( s_j ) \ni  s_j +\epsi v_0$,
 and hence, 
$({\rm id} + \epsi \alpha)^{-1}( s_j +\epsi v_0 )= s_j $. We have hence proved that $v_{0\epsi}\to v_0$ and
    $|v_{0\epsi}| \leq |v_0|$ a.e.~in
    $\Omega$, whence \eqref{eq:v0conv} follows by  the  dominated convergence.

 As for (ii),  as we have that
    $$ u_{0\epsi}- u_0:=
  \left\{
    \begin{array}{ll}
     0 \quad&\text{on} \  \hat{\Omega},\\
      \epsi v_0&\text{on} \ \Omega_j, \ \forall j \in J ,\\
    \end{array}
  \right.$$
   we readily  obtain \eqref{eq:u0conv}  and \eqref{eq:u0conv2}.  This in particular implies that
\begin{align}
 & | \beta_{\epsi H}(u_{0\epsi})|\leq |\beta_{\epsi H }(u_0)| + |
   \beta_{\epsi H }(u_{0\epsi}) - \beta_{\epsi  H}(u_{0})| \nonumber\\
  &\quad  \leq
  |\beta_H^\circ(u_0)| + \frac{1}{\epsi}|  u_{0\epsi} -
  u_0| \stackrel{\eqref{eq:u0conv}}{\leq} |\beta_H^\circ(u_0)| + | v_0 |
    \quad \text{a.e. in} \ \Omega.  \nonumber
\end{align}
 Thus \eqref{eq:beta0} follows.  

We next prove (iii). Since $v_{0{\ne}} = \alpha_{\ne H} (u_{0  \epsi  })$ we have
    \begin{align*}
  \int_{\Omega}
  \haz{\eta_{\ne}}(v_{0\ne}) \,\de x&=
         \int_{\Omega}\haz{\alpha_{\ne}}^*(v_{0\ne })\,\de
         x = -\int_{\Omega}\haz{\alpha_{\ne}}(u_{0 \epsi})\,\de x + (u_{0 \epsi},
         v_{0\ne }) \\
      &\le |(u_{0\epsi}, v_{0\ne})| \leq {\nu} \|u_{0\epsi}\|^2 +
        |(u_{0\epsi},v_{0\epsi})|\\
      & \leq 2 \nu
    \|u_0\|^2 + 2 \nu \| u_{0\epsi}-u_0\|^2 +
      (\|u_0\|+\|u_{0\epsi}-u_0\|)\|v_{0}\|\\
      &\leq  2\nu \|u_0\|^2 +\|u_0\|\,\|v_0\|+ (2\nu  \epsi +1)\epsi \|v_0\|^2.
    \end{align*}
    where we also used \eqref{eq:v0conv}  and \eqref{eq:u0conv2}. 
    
 We finally prove (iv). Recalling  that 
  $u_{0\epsi} = \alpha_{\ne}^{-1}(v_{0\ne})$ a.e.~in $\Omega$, we see that
    \begin{align*}
      &\int_\Omega \haz{\beta_{\epsi  q}\circ
      \alpha_{\ne}^{-1}}(v_{0\ne})\, \de x \leq ((\beta_{\epsi  q
         H}\circ
     \alpha_{\ne H}^{-1})(v_{0\ne}), v_{0\ne}) =   (\beta_{\epsi  q  H}
      (u_{0\epsi}), v_{0\ne}) \\
      &\quad \leq  \|
    \beta_{\epsi  q  H}(u_{0\epsi})\| (\nu \|u_{0  \epsi } \| + \|v_{0\epsi}\|).
    \end{align*}
     The assertion follows by using \eqref{eq:beta0} in order to
    check that
    \begin{align*}
      &\| \beta_{\epsi  q H} (u_{0\epsi})\|  = \left(\int_\Omega
      |\beta_{\epsi H}(u_{0\epsi})|^{2 q -2}\, \de x\right)^{1/2}
      \stackrel{\eqref{eq:beta0}}{\leq}
      \left(\int_\Omega(|\beta^\circ_H(u_0)| + |v_0|)^{2 q -2}\, \de
      x\right)^{1/2}\\
      &\quad \leq  2^{ q -3/2} ( \| \beta^\circ_H(u_0)\|_{L^{2 q-2}(\Omega)}^{ q -1} + \| v_0
        \|_{L^{2 q -2}(\Omega)}^{ q -1}).
    \end{align*}
    This completes the proof.
\end{proof}

\subsection{Galerkin approximation}\label{sec:galerkin}

Let $(e_i)_i \subset V$ denote normalized eigenfunctions of the Laplacian with  the  homogeneous
Dirichlet boundary  condition,  i.e., solutions  $e_i \in V$  to $ A e_i = \lambda_i
e_i $ with $ \| e_i\|_V=1$
for every $i \in \N$ and increasing eigenvalues $\lambda_i >0$.
Such a set is orthogonal in $H$ and orthonormal in $V$. By setting 
\[ H_n:= \Span\{e_1, \dots, e_n\}  \simeq \R^n\]
for all $n\in{\mathbb N}$ one has that the  closures  of $ \cup_n H_n$ in $H$ and $V$  coincide  with $H$ and $V$, respectively.
Let   $\pi_n:H\to H_n$ denote the orthogonal projection on $H_n$ and recall that
$ \pi_n(u)=\sum_{i=1}^n u_i e_i$ for all $u \in H$
where $ u_i:= (u,e_i)$ for $i=1,
  \dots, n$.
We also introduce the mapping
${r}_n: \R^n \to H_n$ given by
\begin{align*}
{r}_n(\xx):= \sum_{i=1}^n x_i e_i \quad \forall \xx =(x_1,\dots,x_n)\in
  \R^n.
\end{align*}

 We claim  that 
\begin{align}
  (\pi_n \circ \alpha_{\ne H})|_{H_n} &=(\pi_n \circ(\nu \,{\rm id}_H
                                        +\alpha_{\epsi  H}))|_{H_n}\nonumber\\
  &= \nu \, {\rm id}_{H_n} + (\pi_n
  \circ \alpha_{\epsi H})|_{H_n}:H_n \to H_n\ \ \text{is onto}\label{eq:vecchiolemma}
\end{align}
where ${\rm id}_H$ and ${\rm id}_{H_n}$  denote  the identities 
in $H$ and $H_n$, respectively.
 Indeed, \eqref{eq:vecchiolemma} follows from the fact that  $ \nu \, {\rm id}_{H_n}+ (\pi_n \circ
 \alpha_{\epsi H})|_{H_n}$ is  well  defined on $H_n$, strongly
  monotone and Lipschitz continuous  on $H_n$, hence maximal
 monotone and onto.  In particular, its inverse
 $$\eta_{\ne n H}: =  ((\pi_n \circ \alpha_{\ne H})|_{H_n})^{-1} =  (\nu \, {\rm id}_{H_n}+ (\pi_n \circ
 \alpha_{\epsi H} )|_{H_n} )^{-1} :H_n \to H_n$$ is well defined, monotone, and Lipschitz continuous  (and therefore, maximal monotone).

We now proceed  to  the Galerkin approximation. To start with, for all
 $\epsi,\,  \nu \in (0,1)$,  and $n \in \N$ we set
$$
z_{0\ne n}:=\pi_n(v_{0\ne}) = (\pi_n
\circ \alpha_{\ne H})(u_{0\epsi}).
$$
 Here $v_{0\ne}$ is given by Lemma \ref{L:regu_data}. 
We assume $\zz_{0\ne n}\in \R^n$ to  be  given by
$(\zz_{0\ne n})_i:=(z_{0\ne n},e_i)$, for $i=1,\dots,n$, and we define the mapping 
$\ddelta_{\ne n}: [0,T]\times \R^n \to \R^n$ as
\begin{align*}
(\ddelta_{\ne n}(t,\xx))_i&:=  \lambda_i((\eta_{\ne n  H }\circ
  {r}_n)(\xx),e_i) + ((\beta_\epsi \circ \eta_{\ne n  H }\circ
                            {r}_n)(\xx), e_i) \\
  &\quad- ((\mathfrak{g}(t,\cdot) \circ \eta_{\ne n  H }\circ
                 {r}_n)(\xx), e_i) \quad \text{for} \  i=1, \dots, n.
\end{align*}                 
Notice that
 $\ddelta_{\ne n}(t,\cdot)$ is Lipschitz continuous  in $\R^n$  uniformly  for  $t\in [0,T]$, as it is a composition of
 Lipschitz continuous maps  (see in particular \eqref{eq:ass_lip3})  and $H_n \simeq \R^n$. 
An application of~\cite[Theorem~5.1]{Akagi19} then ensures that there exists
a unique $\zz_{\ne n} \in L^2(0,T; \R^n)$ such that  $\kappa\ast (\zz_{\ne n}-\zz_{ 0\ne n}) \in W^{1,2}(0,T; \R^n)$, $(\kappa\ast (\zz_{\ne n}-\zz_{ 0\ne n}))(0) = 0$, and  
\begin{equation}\label{eq:vector}
    \partial_t (\kappa\ast (\zz_{\ne n}-\zz_{ 0\ne n}))(t) +\ddelta_{\ne
      n}(t,\zz_{\ne n} (t))=0 \quad \text{in } \R^n, \ \text{for a.e.} \  t \in (0,T).
  \end{equation}
 Indeed, in the notation of~\cite[Theorem~5.1]{Akagi19} it suffices to
 choose $H:=\R^n$, $\varphi:=0$, $f:=0$,  and
 $ F:=\ddelta_{\ne n}$.

 Set now $z_{\ne n}:={r}_n(\zz_{\ne n})  \in  L^2(0,T;H_n)$ and
 $u_{\ne n} :=  \eta_{\ne n H}(z_{\ne n})  \in  L^2(0,T;H_n)$, so that
 $$z_{\ne n}= (\pi_n \circ
    \alpha_{\ne H})(u_{\ne n}).$$
By multiplying the $i$-th component of \eqref{eq:vector} by
$e_i$ and summing from $ i= 1$ to $ i = n$ we find that  
\begin{align}
  &   \partial_t( \kappa\ast ( z_{\ne n}
      -z_{ 0 \ne n})) + A u_{\ne n} + (\pi_n \circ
  \beta_{\epsi H})(u_{\ne n}) \nonumber\\
  &\quad= (\pi_n\circ G)(u_{\ne n}) \quad \text{in} \   H_n,  \ \text{a.e.~in}  \  (0,T), \label{eq:probapprn}
    \end{align}
 where $A u_{\nu \epsi n}$ also denotes its realization in $H$, i.e., $A u_{\nu \epsi n} = \sum_{i=1}^n \lambda_i (u_{\nu \epsi n}, e_i) e_i \in H_n$. 
In the following, we shall use the
notation 
\begin{equation}\label{v_nuen}
v_{\ne n}:=\alpha_{\ne H}(u_{\ne n})  \in  L^2(0,T;H),
\end{equation}
so that $z_{\ne n} =\pi_n(v_{\ne n})  \in  L^2(0,T;H_n)$. 

\subsection{A-priori estimates  for  the Galerkin approximation}\label{sec:estimates} 

For the sake of notational simplicity, henceforth we use the same
symbol $C$ to denote a generic positive constant, possibly depending on data
but independent of $\epsi$, $\nu$, and $n$. The actual value of the constant $C$
may vary from line to line. We will denote by $C(\epsi)$ a generic
positive constant depending on data and on~$\epsi$, but not on  neither  $\nu$
nor $n$.

%

Let us start by testing equation \eqref{eq:probapprn} by $u_{\ne n}$ and getting
\begin{align*}
  & \left( \partial_t (\kappa\ast( z_{\ne n}-z_{0{\ne n}})),
    u_{\ne n}  \right)  + ( A  u_{\ne n},  u_{\ne n}) \\
  &\quad + (
  (\pi_n\circ \beta_{\epsi H})(u_{\ne n}), u_{\ne n}) =
    ((\pi_n\circ G)(u_{\ne n}),u_{\ne n})\quad \text{a.e.~in} \ (0,T).
    \end{align*}
Since $u_{\ne n},\, z_{\ne n} \in H_n$ a.e.~in $(0,T)$,  we drop 
$\pi_n$  in the above equation, and replace $z_{\ne n} $ by
$v_{\ne n}$  and $z_{0\ne n} $ by $v_{0\ne}$. This gives
\begin{align}
  &( \partial_t (\kappa\ast( v_{\ne n}-v_{0\ne})),
    u_{\ne n}) + \|  u_{\ne n}\|^2_V + ( \beta_{\epsi H}(u_{\ne n}), u_{\ne n}) =
    ( G(u_{\ne n}),u_{\ne n})  \label{eq:conv}
    \end{align}
a.e.~in $(0,T)$.  As $v_{0 \ne} \in D(\psi_{\eta_{\ne}})$  (see Lemma \ref{L:regu_data}, (iii)),  $v_{\ne n} ,u_{\ne n} \in L^2(0,T;H)$,  $\kappa\ast( v_{\ne n}-v_{0\ne}) \in W^{1,2}(0,T;H)$, $(\kappa\ast( v_{\ne
  n}-v_{0\ne}))(0) = 0$,   $\psi_{\eta_{ \ne 
 }}( v_{\ne n})\in L^1(0,T)$  (indeed, one can prove it as in the proof of Lemma \ref{L:regu_data}, (iii)),  and $u_{\ne n}\in \eta_{\ne 
  H}(v_{\ne n})$ a.e.~in $(0,T)$  (see~\eqref{v_nuen}),  by applying the nonlocal chain-rule
inequality \eqref{eq:chain} we get
$$\ell \ast \big( \partial_t(\kappa \ast (v_{\ne n} - v_{0\ne  })),u_{\nu 
  	 \epsi  n} \big)  \geq \int_{\Omega} 
    \haz{\eta_{\ne}} (v_{\ne n})\, \de x-\int_\Omega
    \haz{\eta_{\ne}}(v_{0\ne}) \,\de x\quad \text{a.e.~in} \ (0,T).$$
    Hence, convolving \eqref{eq:conv} with $\ell$ we get
\begin{align*}
  & \int_{\Omega} 
    \haz{\eta_{\ne}} (v_{\ne n})\, \de x-\int_\Omega
    \haz{\eta_{\ne}}(v_{0\ne }) \,\de x
    + \ell\ast \|u_{\ne n}\|^2_{V} +
    \ell\ast(\beta_{\epsi H}(u_{\ne n}), u_{\ne n}) \\
  &\quad \leq
    \ell\ast (G(u_{\ne n}), u_{\ne n})\quad \text{a.e.~in} \ (0,T).
\end{align*}
The above right-hand side can be controlled via \eqref{ass_lip2} as
\begin{align*}
   \ell\ast (G(u_{\ne n}), u_{\ne n})
  \stackrel{\eqref{ass_lip2}}{\leq} \ell \ast  \frac{1}{2c_V^2}   \| u_{\ne
  n}\|^2 + \ell\ast  C_G   \leq \ell\ast   \frac12  \| u_{\ne
  n}\|^2_V + C \quad \text{a.e.~in} \ (0,T).
\end{align*}
Accordingly, we get
\begin{align}
\MoveEqLeft{
\int_{\Omega} \haz{\eta_{\ne}} (v_{\ne n})\, \de x +  \ell \ast \frac12
  \|u_{\ne n}\|^2_{V}  + \ell \ast (\beta_{\epsi H}(u_{\ne n}),
                u_{\ne n}) 
}\nonumber\\
& \le \int_{\Omega}
  \haz{\eta_{\ne}}(v_{0\ne }) \,\de x +C\label{eq:bound0}
\end{align}
a.e.~in $(0,T)$.
  Recalling  (iii) of Lemma \ref{L:regu_data}, we also find that
    \begin{align*}
\int_{\Omega} \haz{\eta_{\ne}}(v_{0\ne}) \,\de x\leq C. 
    \end{align*}
Since all the terms in the left-hand side of \eqref{eq:bound0} are
nonnegative, each of them is  uniformly bounded. In
particular, we have that $\ell\ast \|u_{\ne n}\|_V^2\leq C$ a.e.~in
$(0,T)$. By convolving this with $\kappa$
we get that
\begin{equation}
  \label{eq:bound1}
  \| u_{\ne n}\|_{L^2(0,T;V)}\leq C.
\end{equation}
As $A  : V \to V^*$ is bounded we also get
\begin{equation}
  \label{eq:bound2}
  \| Au_{\ne n}\|_{L^2(0,T;V^*)}\leq C.
\end{equation}
Bound \eqref{eq:bound1} and the $\epsi^{-1}$-Lipschitz continuity of
 $\alpha_{\epsi }$
and $\beta_{\epsi }$  entail that 
\begin{equation}
  \label{eq:bound3n}
  \| \alpha_{\epsi H} (u_{\ne n})\|_{L^2(0,T;V)}+ \| \beta_{\epsi H}(u_{\ne n}) \|_{L^2(0,T;V)}\leq C(\epsi),
\end{equation}
which in particular also gives 
\begin{equation}
  \label{eq:bound4n}
  \| (\pi_n \circ \beta_{\epsi H})(u_{\ne n}) \|_{L^2(0,T;H)}\leq C(\epsi).
\end{equation}
Moreover,  the $(\nu +\epsi^{ -1})$-Lipschitz continuity of $\alpha_{\nu \epsi H}$  and $\alpha_{\ne H}(0) = 0$,  gives that
\begin{equation}
  \label{eq:bound5n}
  \| v_{\ne n}\|_{L^2(0,T;V)}+\| z_{\ne n}\|_{L^2(0,T; H)} \leq C(\epsi)
\end{equation}
 uniformly for  $n \in \N$ and  $\nu  \in (0,1)$  (see \eqref{v_nuen}). 
Bound \eqref{eq:Gbound} implies that
\begin{align}
  &\|(\pi_n\circ G)(u_{\ne n})\|_{L^2(0,T;H)} \leq
    \|G(u_{\ne n})\|_{L^2(0,T;H)} \nonumber \\
  &\quad \leq    \Lambda_g |\Omega|^{1/2 }  T^{1/2}+
 \Lambda_g \|u_{\ne n}\|_{L^2(0,T;H)}  \leq C . \label{eq:bound6n}
    \end{align}
 Finally, a comparison in \eqref{eq:probapprn} and
 bounds \eqref{eq:bound2}, \eqref{eq:bound4n}, and \eqref{eq:bound6n} entail
\begin{equation}
   \label{eq:bound5nn}
   \| \partial_t (\kappa\ast (z_{\ne n} - z_{0\ne n}))\|_{L^2(0,T;V^*)} \leq C(\epsi).
 \end{equation}

 \subsection{Passage to the limit as $n\to \infty$}\label{sec:limit}

The a-priori estimates from Section \ref{sec:estimates} allow us to pass to the limit as $n\to \infty$,  for $\epsi >0$ and $\nu\in (0,1)$ fixed. We start by controlling the time increments of $z_{\ne n} $. Using the short-hand notation
    \[ g_{\ne n}(t) :=  \partial_t( \kappa\ast(
        z_{\ne n}-z_{0\ne n}) )(t), \quad t \in (0,T),\]
         for  every  $h \in (0,T)$, we can estimate
   \begin{align}
       &\int_0^{T-h} \|z_{\ne n}(t+h)- z_{\ne n}(t)\|^2_{V^*}\, \de t
         \nonumber\\
     &\quad=  \int_0^{T-h} \|(\ell\ast g_{\ne n})(t+h)- (\ell\ast g_{\ne n})(t)\|^2_{V^*} \,\de t\nonumber\\
       &\quad= \left \| (\ell(\cdot+h)-\ell) \ast g_{\ne n} +
         \int_{\,\cdot\,}^{\,\cdot\,+h} \ell(\cdot + h-s) g_{\ne n}(s) \,\de s
         \right \|_{L^2(0,T-h; V^*)}^2\nonumber\\
     &\quad \leq 2 \left \| (\ell(\cdot+h)-\ell) \ast g_{\ne n}
       \right\|^2_{L^2(0,T-h; V^*)} \nonumber\\
     &\qquad + 2\left \| \int_{\,\cdot\,}^{\,\cdot\,+h} \ell(\cdot + h-s) g_{\ne n}(s) \,\de s
       \right \|_{L^2(0,T-h; V^*)}^2.\label{eq:incr}
   \end{align}
The first term in the above right-hand side can be bounded via the
Young convolution inequality \eqref{eq:young}  as follows:
\begin{align*}
  &2 \|( \ell(\cdot +t) - \ell)\ast g_{\ne n}\|^2_{L^2(0,T-h;V^*)}\stackrel{\eqref{eq:young}}{\leq}2\|\ell(\cdot+h)-\ell\|^2_{L^1(0,T)} \|g_{\ne
    n}\|^2_{L^2(0,T;V^*)}\\
  &\quad\leq C (\epsi)  \|\ell(\cdot+h)-\ell\|^2_{L^1(0,T)} , 
  \end{align*}
   where we used \eqref{eq:bound5nn} in the last inequality. 
By defining
$$\ell_h (\sigma)
:=
\left\{
\begin{array}{ll}
  \ell(\sigma) \quad &\text{for} \ \sigma \in (0,h),\\[1mm] 
  0 \quad &\text{for} \ \sigma \in [h,T),\\
\end{array}
\right.
$$
we can control the second term in the  right-hand  side of
\eqref{eq:incr} as follows:
\begin{align*}
  &2\left \| \int_{\,\cdot\,}^{\,\cdot\,+h} \ell(\cdot + h -s) g_{\ne n}(s) \,\de s
  \right \|_{L^2(0,T-h; V^*)}^2\\
  &\quad \leq
  2\int_0^{T-h}\left(\int_{t}^{t+h} \ell(t+h-s) \|g_{\ne
    n}(s)\|_{V^*} \,\de s\right)^2\de t\\
  &\quad =  2\int_0^{T-h}\left(\int_{0}^{t+h} \ell_h(t +h -s) \|g_{\ne
    n}(s)\|_{V^*} \,\de s\right)^2\de t\\
  &\quad  = 2 \big\| \ell_h * \|g_{\ne n}\|_{V^*} \big\|_{L^2(h,T)}^2 \leq 2 \| \ell_h\|_{L^1(0, T )}^2  \|g_{ \ne n} \|^2_{L^2(0, T ;V^*)} \stackrel{ \eqref{eq:bound5nn}}{\leq} C(\epsi) 
    \| \ell\|^2_{L^1(0,h)}.
\end{align*}
All in all, we have proved that
$$\int_0^{T-h} \|z_{\ne n}(t+h)- z_{\ne n}(t)\|^2_{V^*}\, \de t \leq
 C (\epsi) \left( \|\ell(\cdot+h)-\ell\|^2_{L^1(0,T)} +
\|\ell\|_{L^1(0,h)}^2 \right).$$
As $\ell \in L^1(0,T)$ this entails that
  \begin{equation}
    \limsup_{h \to 0} \sup_{n \in \N}\int_0^{T-h} \|z_{\ne n}(t+h)-
  z_{\ne n}(t)\|^2_{V^*}\, \de t =0. \label{eq:wvar}
  \end{equation} 

As a consequence of bounds \eqref{eq:bound1}--\eqref{eq:bound5n}
and of \eqref{eq:wvar}, an application of the Aubin--Lions lemma
entails that,  up to a subsequence, as $n \to \infty$, 
\begin{align}
  &u_{\ne n} \weakto u_{\ne}\quad \text{in} \ L^2(0,T; V),\label{eq:uweak}\\
   &v_{\ne n} \weakto v_{\ne} \quad \text{in} \  L^2(0,T;  V ), \label{eq:vweak}\\
    &z_{\ne n} \to \bar v_{\ne} \quad \text{in} \  L^2(0,T;
      V^*), \label{eq:wstrong}\\
   &z_{\ne n} \weakto \bar v_{\ne} \quad \text{in} \  L^2(0,T; H), \label{eq:wweak}\\   
   &\partial_t(\kappa\ast (z_{\ne n} - z_{0\ne n})) \weakto
  \xi_{\ne} \quad \text{in} \ \ L^2(0,T;V^*).  \label{eq:xiweak}
\end{align}

It is a standard matter to check that $\bar v_{\ne} = v_\ne$. Indeed, for all $y\in L^2(0,T,H)$ one has
\begin{align*}
  &\int_0^T (\bar v_\ne,y)\, \de t \stackrel{\eqref{eq:wweak}}{=}
    \lim_{n\to \infty} \int_0^T (z_{\ne n},y)\, \de t =
    \lim_{n\to \infty} \int_0^T (v_{\ne n},\pi_n(y))\, \de t \\
  &\quad = \lim_{n\to \infty} \left(\int_0^T
    (v_{\ne n},y)\, \de t +\int_0^T
    (v_{\ne n},\pi_n(y) -y)\, \de  t\right) \stackrel{\eqref{eq:vweak}}{=} \int_0^T (v_\ne,y)\, \de t
\end{align*}
where we used bound \eqref{eq:bound5n}  and the fact that $\pi_n(y)\to y$ in $L^2(0,T;H)$  as $n \to \infty$.  Moreover,  from the linearity and maximal monotonicity of the map $\mathcal B$ defined with $X =  V^*$ and $p = 2$ (see \S \ref{sec:main14}), from the fact that
$z_{0\ne n}=\pi_n(v_{0\ne}) \to v_{0\ne}$ in $H$  as $n \to \infty$,  we readily obtain  $\kappa * (v_{\nu \epsi} - v_{0\ne}) \in W^{1,2}(0,T;V^*)$ with $(\kappa * (v_{\nu \epsi} - v_{0\ne}))(0) = 0$ and  $\xi_\ne = \partial_t(\kappa\ast (v_\ne- v_{0\ne}))$.  Furthermore,  we note that 
\begin{align}
  Au_{\ne n}\weakto Au_\ne\quad \text{in} \ \ L^2(0,T;V^*). \label{eq:Aweak}
\end{align}

Owing to convergences \eqref{eq:uweak} and \eqref{eq:wstrong} we have that
\begin{align}
  &\int_0^T (v_{\ne n},u_{\ne n}) \, \de t=  \int_0^T (z_{\ne n},u_{\ne n}) \,
  \de t =  \int_0^T \la z_{\ne n},u_{\ne n}\ra\, \de t \nonumber \\
  &\quad \to \int_0^T \la
  v_\ne, u_\ne\ra \, \de t =\int_0^T (
  v_\ne, u_\ne) \, \de t .\label{eq:toli}
\end{align}
By the classical tool~\cite[Proposition~2.5, p.~27]{Brezis73},  it follows  from \eqref{v_nuen}  that $v_{\ne}\in \partial
\Psi_{\alpha_{\ne}}(u_{\ne})$ or, equivalently,
$v_\ne = \alpha_\ne(u_\ne)$ a.e.~in $Q$.

The lower semicontinuity  and convexity of $\Psi_{\alpha_\ne}$, convergences \eqref{eq:uweak}--\eqref{eq:vweak}, and \eqref{eq:toli} ensure that
\begin{align*}
  &\Psi_{\alpha_\ne}(u_\ne)\leq  \liminf_{n\to \infty}
    \Psi_{\alpha_\ne}(u_{\ne n}) \leq \limsup_{n\to \infty} \Psi_{\alpha_\ne}(u_{\ne n}) \\
  &\quad\leq  \limsup_{n\to \infty}\left( \Psi_{\alpha_\ne}(u_\ne)+ \int_0^T(v_{\ne n},  u_{\ne n}-u_\ne)\, \de t
    \right) = \Psi_{\alpha_\ne}(u_\ne).
\end{align*}
This in particular proves that
$$\Psi_{\alpha_\ne}(u_{\ne n})\to
\Psi_{\alpha_\ne}(u_\ne).$$ 
Since the functional $\Psi_{\alpha_\ne}$ is
strictly convex, we deduce from convergence \eqref{eq:uweak} and
\cite[Theorem~3~(ii)]{Visintin} that indeed
\begin{equation}
  \label{eq:ustrong}
  u_{\ne n} \to u_\ne \quad \text{in} \ L^2(0,T;H).
\end{equation} Since $\beta_{\epsi H}$ is $ \epsi^{-1}$-Lipschitz continuous in $H$, we also find that
\begin{equation}\label{eq:noproj}
\beta_{\epsi H}(u_{\ne n}) \to \beta_{\epsi H}(u_{\ne}) \quad \mbox{ in } L^2(0,T;H).
\end{equation}
Here we note that $\beta_{\epsi H}(u_{\ne}) \in L^2(0,T;V)$ as $\beta_{\epsi}$ is $\epsi^{-1}$-Lipschitz continuous in $\R$ and $u_\ne\in L^2(0,T;V)$.  
The strong convergence \eqref{eq:ustrong} and the Lipschitz continuity
in \eqref{eq:ass_lip3} allow us to check that 
\begin{align*}
  & \| (\pi_n \circ G)(u_{\ne n}) - G(u_\ne)\|_{L^2(0,T;H)}\nonumber\\
  &\quad \leq \| (\pi_n
  \circ G)(u_{\ne n}) - (\pi_n \circ G) (u_\ne))\|_{L^2(0,T;H)}+\| (\pi_n
   \circ G)(u_\ne) - G(u_\ne)\|_{L^2(0,T;H)}\\
  &\quad \leq \Lambda_g\| u_{\ne n} - u_\ne \|_{L^2(0,T;H)}+\| (\pi_n
  \circ G)(u_\ne) - G(u_\ne)\|_{L^2(0,T;H)}\to 0,
\end{align*}
so that
\begin{equation}
  \label{eq:Gstrong} (\pi_n \circ G)(u_{\ne n}) \to G(u_\ne)  \quad
  \text{in} \ L^2(0,T;H).
\end{equation}
 Moreover, one can similarly verify that
\begin{equation}\label{eq:gammaweak}
(\pi_n \circ \beta_{\epsi H})(u_{\ne n}) \to \beta_{\epsi H}(u_{\ne}) \quad \mbox{  in  } L^2(0,T;H).
\end{equation}

Owing to convergences \eqref{eq:xiweak}, \eqref{eq:Aweak}, \eqref{eq:Gstrong}, and \eqref{eq:gammaweak}  we can
pass to the limit in equation \eqref{eq:probapprn} and in bound \eqref{eq:bound1}. Taking
also into account the uniqueness argument of Section \ref{sec:uniqueness}, we have proved the following proposition. 

\begin{proposition}[Well-posedness of the regularized problem]\label{cor:reg} 
Under assumptions \eqref{Azero}--\eqref{eq:init}, for all $\epsi>0$ and $\nu \in(0,1)$,  set $v_{0\ne}=\nu u_{0\epsi} + v_{0\epsi}$ with $u_{0\epsi}$ and $v_{0\epsi}$ defined in \eqref{eq:u0}--\eqref{eq:v0}. Then  there exists a unique triplet
$(u_\ne,v_\ne,w_\ne)\in  L^2(0,T;V\times V  \times V )$ 
  such that  $\kappa * (v_{\nu \epsi} - v_{0\ne}) \in W^{1,2}(0,T;V^*)$ with $(\kappa * (v_{\nu \epsi} - v_{0\ne}))(0) = 0$  and 
  \begin{align}
    &\partial_t (\kappa \ast (v_\ne - v_{0\ne}) )
       + A u_\ne + w_\ne =G(u_\ne) \quad \text{in} 
      \ V^*, \, \text{a.e.~in} \ (0,T), 
    \label{eq:prob5e}\\
    & v_\ne = \alpha_\ne(u_\ne) \quad \text{a.e.~in} \ Q, 
    \label{eq:prob2e}\\
    &w_\ne = \beta_\epsi(u_\ne)\quad \text{a.e.~in}   \  Q.
    \label{eq:prob4e}
  \end{align}
Moreover, there exists a constant $ C_1>0$ independent of $\nu  \in (0,1)$ and $\epsi  \in (0,1)$
such that
\begin{align}
  &\| u_{\ne }\|_{L^2(0,T;V)}
  \leq C_1.\label{eq:bounds}
\end{align}
\end{proposition}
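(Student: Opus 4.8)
The plan is to read the regularized problem directly off the Galerkin limit already carried out above. From the convergences \eqref{eq:uweak}--\eqref{eq:xiweak} I take $u_\ne$ as the weak $L^2(0,T;V)$-limit of $u_{\ne n}$, together with $v_\ne$, $\bar v_\ne$ and $\xi_\ne$; the identifications $\bar v_\ne=v_\ne$ and $\xi_\ne=\partial_t(\kappa\ast(v_\ne-v_{0\ne}))$, with $\kappa\ast(v_\ne-v_{0\ne})\in W^{1,2}(0,T;V^*)$ and $(\kappa\ast(v_\ne-v_{0\ne}))(0)=0$, follow from the linearity and $m$-accretivity of $\mathcal B$ on $L^2(0,T;V^*)$ together with $z_{0\ne n}=\pi_n(v_{0\ne})\to v_{0\ne}$ in $H$, exactly as recorded above. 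The nonlinear identifications $v_\ne=\alpha_\ne(u_\ne)$ and $w_\ne=\beta_\epsi(u_\ne)$ are supplied by \eqref{eq:toli} combined with \cite[Proposition~2.5]{Brezis73} and by the strong convergences \eqref{eq:ustrong} and \eqref{eq:gammaweak}, respectively. It then remains to pass to the limit in the projected identity \eqref{eq:probapprn}: testing against a fixed basis element $e_j$ and letting $n\to\infty$ for $n\ge j$, the four convergences \eqref{eq:xiweak}, \eqref{eq:Aweak}, \eqref{eq:Gstrong}, \eqref{eq:gammaweak} yield \eqref{eq:prob5e} in $V^*$ for a.e.\ $t\in(0,T)$, since $\Span\{e_j\}_j$ is dense in $V$.

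To secure membership in $L^2(0,T;V\times V\times V)$ I note that $u_\ne\in L^2(0,T;V)$ from \eqref{eq:uweak}, and that $v_\ne=\alpha_\ne(u_\ne)$ and $w_\ne=\beta_\epsi(u_\ne)$ also lie in $L^2(0,T;V)$ because $\alpha_\ne$ and $\beta_\epsi$ are Lipschitz with $\alpha_\ne(0)=\beta_\epsi(0)=0$, so that the Lipschitz post-composition preserves $H^1_0(\Omega)=V$ and the spatial gradients are controlled by those of $u_\ne$. The uniform estimate \eqref{eq:bounds} is then immediate: \eqref{eq:bound1} provides $\|u_{\ne n}\|_{L^2(0,T;V)}\le C$ with $C$ independent of $n$, $\nu$, and $\epsi$, and the weak lower semicontinuity of the norm together with \eqref{eq:uweak} passes this bound to $u_\ne$, giving $C_1:=C$.

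For uniqueness I would replay the scheme of Section \ref{sec:uniqueness} verbatim, now with the regularized data in place of $\alpha,\beta$: the map $\alpha_\ne=\nu\,{\rm id}_\R+\alpha_\epsi$ is strongly monotone with constant $C_\alpha=\nu$ (as $\alpha_\epsi$ is monotone), while $\beta_\epsi$ is Lipschitz with constant $\Lambda_\beta=\epsi^{-1}$ and $G$ with constant $\Lambda_g$. Taking the difference of two solutions, convolving with $\ell$ to exploit $\ell\ast\kappa\equiv1$, and testing with $\tilde u=u_{\ne,1}-u_{\ne,2}$ reproduces \eqref{eq:general} with these constants; choosing $\tau=T/m$ so that $\|\ell\|_{L^1(0,\tau)}\le \nu/(\sqrt2(\epsi^{-1}+\Lambda_g))$ and iterating over the intervals $[j\tau,(j+1)\tau]$ yields $\tilde u=0$ a.e.\ in $Q$, whence $v_{\ne,1}=\alpha_\ne(u_{\ne,1})=\alpha_\ne(u_{\ne,2})=v_{\ne,2}$ and likewise $w_{\ne,1}=w_{\ne,2}$.

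Since the genuinely delicate analytic steps are already in hand — the monotonicity identification \eqref{eq:toli} and the passage from weak to strong convergence \eqref{eq:ustrong} via strict convexity of $\Psi_{\alpha_\ne}$ — the proposition is in large part a matter of assembly, and I expect the one point requiring real care to be the uniqueness estimate. That estimate rests on the nonnegativity of $\int_0^t\langle\ell\ast A\tilde u,\tilde u\rangle\,\de s$, i.e.\ on $\ell$ being of positive type; I would therefore make sure this structural property of the kernel is invoked precisely as in Section \ref{sec:uniqueness} (it is the same $\ell$ and the same operator $A$), and that the $L^2(0,T;V)$-regularity of $v_\ne$ and $w_\ne$ established above renders every duality pairing in that computation well defined.
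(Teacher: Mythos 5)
Your proposal is correct and follows essentially the same route as the paper: the paper's proof of Proposition \ref{cor:reg} is exactly the assembly you describe — the Galerkin-limit convergences \eqref{eq:uweak}--\eqref{eq:xiweak}, the identifications via \eqref{eq:toli}, \eqref{eq:ustrong}, and \eqref{eq:gammaweak} yielding \eqref{eq:prob5e}--\eqref{eq:prob4e}, the bound \eqref{eq:bounds} inherited from \eqref{eq:bound1} by weak lower semicontinuity, and uniqueness obtained by invoking the argument of Section \ref{sec:uniqueness} with $\alpha_\ne$ strongly monotone (constant $\nu$) and $\beta_\epsi$ Lipschitz (constant $\epsi^{-1}$). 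The caveat you rightly flag — that this uniqueness computation rests on $\ell$ being of positive type, which Section \ref{sec:uniqueness} deduces from convexity of $\kappa$, an assumption not listed among \eqref{Azero}--\eqref{eq:init} — is present in the paper in identical form, since it too simply cites that section.
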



\subsection{Additional a-priori estimates  for  the regularized problem}\label{sec:estimates2}

With the aim of passing to the limit in the regularizations, we
complement bound \eqref{eq:bounds} by proving 
some additional estimates  for  the solution
$(u_{\ne},v_{\ne},w_{\ne})$ of the regularized problem
\eqref{eq:prob5e}--\eqref{eq:prob4e}.
 
 Recalling that $w_\ne=\beta_{\epsi H}(u_{\ne })\in L^2(0,T;V)$
and that $|\beta_\epsi( r )|\leq 1/\epsi$ for all $ r  \in \R$, we readily
check that $y_\ne:=|w_\ne|^{ q-2}w_\ne \in L^2(0,T;V)$  with $q > 2$ given in~(\ref{ass_lip}),  as well. We can hence
test \eqref{eq:prob5e} by $y_\ne$ and obtain
\begin{align}
  &\langle \partial_t(\kappa\ast(v_{\ne } - v_{0\ne})),
 y_\ne\rangle+ \la A u_{\ne },
    y_\ne\ra  + \|w_\ne\|_{L^{ q}(\Omega)}^{ q} = ( G(u_{\ne }),
    y_\ne)  \quad \text{a.e.~in} \ (0,T).\label{eq:test1} 
    \end{align}
  The second term in the left-hand side of \eqref{eq:test1} is nonnegative, since  we see that
  \begin{align}
  &  \la A u_{\ne },
    y_\ne\ra = \int_\Omega \nabla u_{\ne }\cdot
    \nabla (|\beta_\epsi(u_{\ne })|^{ q -2}\beta_\epsi(u_{\ne })) \, \de
    x \nonumber\\
    &\quad = ( q -1)\int_\Omega|\beta_\epsi(u_\ne)|^{ q -2}
    \beta_\epsi'(u_{\ne })|\nabla u_{\ne }|^2\, \de x\geq
    0.\label{eq:Apos}
    \end{align}
  The right-hand side of \eqref{eq:test1} can be
    bounded  from  the sublinearity assumption in \eqref{ass_lip} as
    \begin{align}
      &(G(u_{\ne }),
     y_\ne)\leq \frac{1}{ q'} \|w_\ne\|^{q}_{L^{ q}(\Omega)} +
        \frac{1}{ q}  \|G(u_{\ne })\|^{q}_{L^{ q}(\Omega)} \nonumber\\
      &\quad \leq \frac{1}{q'}  \|w_\ne\|^{q}_{L^{ q}(\Omega)} + C ( 1
        +   \| u_\ne \|^2) \quad
        \text{a.e.~in} \ (0,T)
        \label{eq:estG}.
        \end{align}
  Using  the fact  that $\alpha_{\ne H}$ is
invertible and recalling that $\beta_{\epsi  q}( r ) :=
|\beta_\epsi( r )|^{ q - 2  }\beta_\epsi( r )$ for all $ r  \in \R$  as in Lemma \ref{L:regu_data}(iv), one can write 
  $$y_\ne=\beta_{\epsi  q  H} (u_{\ne })=(\beta_{\epsi  q  H}\circ \alpha_{\ne
    H}^{-1}) (\alpha_{\ne H}(u_{\ne }))= (\beta_{\epsi  q  H}\circ
  \alpha_{\ne H}^{-1})(v_{\ne }).$$
  As $v_{0\ne} \in D(\psi_{\beta_{\epsi  q } \circ \alpha_{\ne}^{-1}})$
  (see Lemma \ref{L:regu_data} (iv)),  $v_{\ne},\, y_\ne \in L^2(0,T;V)$,  $\kappa\ast (v_{\ne } - v_{0\ne}) \in W^{1,2}(0,T;V^*)$ with $(\kappa\ast (v_{\ne } - v_{0\ne}))(0)=0$,  $\psi_{\beta_{\epsi  q } \circ \alpha_{\ne}^{-1}}(v_{\ne }) \in
  L^1(0,T)$  (indeed, one can prove it as in the proof of Lemma
  \ref{L:regu_data} (iv))),  and $y_\ne=\beta_{\epsi  q  H} (u_{\ne })=
  (\beta_{\epsi  q  H}
  \circ \alpha_{\ne H}^{-1})(v_{\ne }) $ a.e.~in $(0,T)$, by
  applying the nonlocal chain-rule
inequality \eqref{eq:chain2} we get
    \begin{align*}
      &\ell\ast \big\langle \partial_t(\kappa\ast(v_{\ne } - v_{0\ne})),
     y_\ne \big\rangle \geq   
      \int_\Omega \haz{\beta_{\epsi  q }\circ
      \alpha_{\ne}^{-1}}(v_{\ne })\, \de x - \int_\Omega
        \haz{\beta_{\epsi  q }\circ
      \alpha_{\ne}^{-1}}(v_{0\ne})\, \de x 
    \end{align*}
    a.e.~in $(0,T)$. Hence, by convolving \eqref{eq:test1} with
    $\ell$, also using \eqref{eq:estG}  and Lemma \ref{L:regu_data}(iv),  we deduce  that 
 \begin{align*}   &\int_\Omega \haz{\beta_{\epsi  q}\circ
      \alpha_{\ne}^{-1}}(v_{\ne })\, \de x
    +\ell\ast \frac{1}{ q}\|w_\ne \|_{L^{ q}(\Omega)}^{ q} \leq  C + \ell\ast C(1+\|u_{\ne}\|^2).
 \end{align*}
 As $\haz{\beta_{\epsi  q}\circ
      \alpha_{\ne}^{-1}}\geq 0$, one has that  
 $$\ell \ast   \frac{1}{ q}\|w_\ne \|_{L^{ q}(\Omega)}^{ q} \leq  C + \ell\ast C(1+\|
                    u_{\ne}\|^2).$$
By convolving with $\kappa$ and using bound \eqref{eq:bounds} we deduce that
 \begin{equation}
   \label{eq:bound3}
\|w_\ne\|_{L^{ q}(Q)} \leq C.
\end{equation}

 Testing \eqref{eq:prob5e} by $v_{\ne } \in L^2(0,T;V)$, we have
\begin{align}
  &\langle \partial_t(\kappa \ast(v_{\ne } - v_{0\ne })),
v_{\ne }\rangle + \la A u_{\ne },
    v_{\ne }\ra +
(  w_\ne, v_{\ne }) = (G(u_{\ne }),
   v_{\ne }) \quad \text{a.e.~in} \ (0,T).\label{eq:test2}
\end{align}
The second and the third  terms  in the left-hand side of \eqref{eq:test2} are nonnegative since $\la A u_{\ne },
    v_{\ne }\ra  = \la A u_{\ne },\alpha_\ne(u_{\ne })\ra \geq 0$,
    see \eqref{eq:Apos}, and
    $$(  w_\ne, v_{\ne }) = (
    \beta_{\epsi H}(u_{\ne }), \alpha_{\ne H}(u_{\ne })) =
    \int_\Omega \beta_\epsi(u_{\ne })\,\alpha_\ne(u_{\ne })\, \de
    x\geq 0$$
    as $\beta_\epsi( r)\,\alpha_\ne( r)\geq 0$ for all $ r\in \R$. Moreover, the right-hand side of \eqref{eq:test2} can be
    bounded as in \eqref{eq:estG}  with the aid of   \eqref{eq:Gbound}, namely, 
    \begin{align*}
     (G(u_{\ne }), v_{\ne }) &\leq \frac 1 {4 \|\ell\|_{L^1(0,T)}}
     \| v_{\ne } \|^2 + \|\ell\|_{L^1(0,T)} \|G(u_{\ne })\|^2\\
     &  \leq \frac 1 {4 \|\ell\|_{L^1(0,T)}} \| v_{\ne }\|^2 + 2 \|\ell\|_{L^1(0,T)} \left(  \Lambda_g^2|\Omega|  + \|u_{\ne }\|^2 \right).
    \end{align*} 
    As  $v_{\ne }\in L^2(0,T;V)$ and  $\kappa \ast (v_{\ne }-v_{0\ne})
    \in W^{1,2}(0,T;V^*)$ along with $(\kappa \ast (v_{\ne
    }-v_{0\ne}))(0)=0$,  an application   of   the nonlocal chain-rule \eqref{eq:chain2} for the functional
    $v\mapsto \psi_{\rm id}(v)= \|v\|^2/2$ ensures that 
  \begin{align*}
    & \ell \ast \langle\partial_t(\kappa \ast(v_{\ne } - v_{0\ne})),
      v_{\ne }\rangle\geq \frac12\|v_{\ne }\|^2 -
      \frac12\|v_{0\ne}\|^2\quad \text{a.e.~in} \ (0,T).
  \end{align*}
  Taking the convolution of \eqref{eq:test2} with $\ell$ and using the above
  bounds we hence have that  
  $$\frac12\|v_{\ne }\|^2 -
      \frac12\|v_{0\ne}\|^2 \leq \ell \ast  \frac1 {4\|\ell\|_{L^1(0,T)}}\|
  v_{\ne }\|^2 +\ell \ast 2 \|\ell\|_{L^1(0,T)} \left(  \Lambda_g^2|\Omega| + \| u_{\ne}\|^2\right).$$
  Recalling that
 $\|v_{0\ne}\|\leq \nu\|u_{0\epsi}\| + \| v_{0\epsi}\|\leq C$ by \eqref{eq:v0conv}--\eqref{eq:u0conv} and using
 the Young convolution inequality \eqref{eq:young}  along with bound \eqref{eq:bounds} we conclude that
 \begin{equation}
   \label{eq:bound4}
   \| v_{\ne }\|_{L^{ 2}(0,T;H)} \leq C.
 \end{equation}
 
 Finally, a comparison in \eqref{eq:prob5e}, bounds \eqref{eq:Gbound},
 \eqref{eq:bounds},  \eqref{eq:bound3}, and the boundedness of $A  : V \to V^*$   ensure that
\begin{equation}
   \label{eq:bound5}
   \| \partial_t (\kappa \ast (v_{\ne } - v_{0\ne }))\|_{L^2(0,T;V^*)} \leq C.
 \end{equation}

\subsection{Passage to the limit as $\epsi \to 0$}\label{sec:limit2}

Bound \eqref{eq:bounds} and the a-priori estimates from
 Section \ref{sec:estimates2} allow us to pass
 to the limit as  $\epsi \to 0$ and $\nu\to0$. In fact,
 these limits can be taken simultaneously. Still, we prefer to pass to
 the limit as $\epsi\to 0$ first, in order to obtain an intermediate existence result,
 which could be of independent interest. The parameter
 $\nu \in (0,1)$ is hence kept fixed in this section. The limit  as  $\nu \to
 0$ is discussed in Section \ref{sec:limit3} below.  

 From  bounds 
\eqref{eq:bounds}, \eqref{eq:bound3}, and
\eqref{eq:bound4}--\eqref{eq:bound5}, by passing to  the limit (up to a subsequence) as  $\epsi \to 0$, we get
\begin{align}
  &u_{\ne } \weakto u_{\nu}\quad \text{in} \ L^2(0,T; V),\label{eq:uweake}\\
   &v_{\ne }  \weakto  v_{\nu} \quad \text{in} \  L^{ 2}(0,T; H), \label{eq:vweake}\\ 
    &w_\ne \weakto w_{\nu}    \quad\text{in} \   L^{ q}(\Omega \times
      (0,T)),  \label{eq:gammaweake}\\
   &\partial_t(\kappa\ast (v_{\ne } - v_{0\ne })) \weakto
  \xi_{\nu} \quad \text{in} \ \ L^2(0,T;V^*) . \label{eq:xiweake}
\end{align}
Arguing as in Section \ref{sec:limit}, bound \eqref{eq:bound5}
guarantees that 
the time increments of
$v_{\ne}$ can be controlled as
 \begin{equation}
    \limsup_{h \to 0} \sup_{\epsi \in (0,1)}\int_0^{T-h} \|v_{\ne}(t+h)-
  v_{\ne }(t)\|^2_{V^*}\, \de t =0.\label{eq:vvar}
\end{equation}
The Aubin--Lions lemma then gives 
\begin{equation}
  \label{eq:vstronge}
v_{\ne } \to v_{\nu} \quad \text{in} \  L^{ 2 }(0,T; V^*).
\end{equation}
Together with convergence \eqref{eq:uweake},  this ensures that
\begin{equation}
  \int_0^T (v_\ne,u_\ne)\, \de t = \int_0^T \la v_\ne,u_\ne\ra\, \de
t\to  \int_0^T \la v_\nu,u_\nu\ra\, \de
t = \int_0^T ( v_\nu,u_\nu)\, \de
t.\label{eq:tolie}
\end{equation}
Recall that we have the convergence \eqref{eq:attouch2}, in
particular, $\partial \Psi_{{ \alpha}_\ne} \to \partial \Psi_{{ \alpha}_\nu}$ in the graph
sense in $L^2(0,T;H)$. Given the
convergences \eqref{eq:uweake}--\eqref{eq:vweake} and
\eqref{eq:tolie}, the extension \eqref{eq:attouch} of~\cite[Proposition~3.59, p.~361]{Attouch} guarantees that $v_\nu \in \partial \Psi_{{ \alpha}_\nu}(u_\nu)$,
namely, $v_\nu \in \alpha_\nu(u_\nu)$ a.e.~in $Q$.

At the same time, 
$\Psi_{\alpha_\ne} \to \Psi_{\alpha_\nu}$ in the Mosco sense in $L^2(0,T;H)$, so that
convergence \eqref{eq:uweake} and the $\liminf$ inequality
\eqref{eq:liminf} imply that
\begin{equation}\label{eq:liminfee}
 \Psi_{\alpha_\nu}(u_\nu) \leq
    \liminf_{\epsi \to 0} \Psi_{\alpha_\ne}(u_{\ne }). 
\end{equation}
    Hence, we can use the pointwise upper bound $\Psi_{\alpha_\ne}(y) \leq
\Psi_{\alpha_\nu}(y)$ for all $y  \in  L^2(0,T;H)$ and all $\epsi>0$  (indeed, we have $\widehat{\alpha_{\ne}} \leq \widehat{\alpha}_\nu$ in $\R$), and limit \eqref{eq:tolie} in order to get
\begin{align*}
  &\Psi_{\alpha_\nu}(u_\nu) \stackrel{ \eqref{eq:liminfee}}{\leq}
    \liminf_{\epsi \to 0} \Psi_{\alpha_\ne}(u_{\ne }) \leq \limsup_{\epsi \to 0} \Psi_{\alpha_\ne}(u_{\ne }) \\
  &\quad\leq  \limsup_{\epsi \to 0}\left( \Psi_{\alpha_\ne}(u_\nu) + \int_0^T(v_{\ne },  u_{\ne }-u_\nu)\, \de t
    \right)  \le  \Psi_{\alpha_\nu}(u_\nu),
\end{align*}
which gives the convergence $\Psi_{\alpha_\ne}(u_{\ne }) \to \Psi_{\alpha_\nu}(u_\nu)$  as $\epsi \to 0$.  We can similarly verify that $\Psi_{\alpha_\epsi}(u_{\ne}) \to \Psi_{\alpha}(u_\nu)$  as $\epsi \to 0$. Indeed, noting that $\alpha_\epsi(u_{\nu \epsi}) = v_{\nu \epsi} - \nu u_{\nu \epsi} \weakto v_\nu - \nu u_\nu =: a_\nu$ weakly in $L^2(0,T;H)$, we have
\begin{align*}
\lefteqn{
\limsup_{\epsi \to 0} (\alpha_\epsi(u_{\nu\epsi}),u_{\nu\epsi})
}\\
&\leq \limsup_{\epsi \to 0} (\alpha_{\nu\epsi}(u_{\nu\epsi}),u_{\nu\epsi}) - \liminf_{\epsi \to 0} \nu \|u_{\nu\epsi}\|^2\\
&\leq (v_\nu, u_\nu) - \nu \|u_\nu\|_H^2
= (a_\nu + \nu u_\nu, u_\nu) - \nu \|u_\nu\|_H^2 = (a_\nu,u_\nu),
\end{align*}
which also enables us to check $\Psi_{\alpha_\epsi}(u_{\ne}) \to \Psi_{\alpha}(u_\nu)$  as $\epsi \to 0$ in a similar fashion. Therefore it follows that 
    \begin{align*}
   &\limsup_{\epsi \to 0}   \frac{\nu}{2} \| u_{\ne}\|_{L^2(0,T;H)}^2 =\limsup_{\epsi \to 0}  \big(
      \Psi_{\alpha_\ne}(u_{\ne }) -
      \Psi_{\alpha_\epsi}(u_{\ne})\big)\\[1.5mm]
      &\quad \leq \limsup_{\epsi \to 0} \Psi_{\alpha_\ne}(u_{\ne })
        -\liminf_{\epsi \to 0} \Psi_{\alpha_\epsi}(u_{\ne}) \\
      &\quad \leq \Psi_{\alpha_\nu}(u_{\nu }) - \Psi_{\alpha}(u_{\nu})= \frac{\nu}{2}  \| u_{\nu}\|_{L^2(0,T;H)}^2,
    \end{align*}
which, along with convergence \eqref{eq:uweake}  and  the uniform convexity of $\|\cdot\|_{L^2(0,T;H)}$,  implies  that
\begin{equation}
  \label{eq:ustronge}
  u_{\ne} \to u_\nu \quad \text{in} \ L^2(0,T;H).
\end{equation} 

Moving from this strong convergence,  the continuity of $G$ in \eqref{ass_lip0} gives that $G(u_\ne) \to G(u_\nu)$ in $L^2(0,T;H)$
and  we also have 
$$\int_0^T (w_\ne,u_\ne)\, \de t \to \int_0^T (w_\nu,u_\nu)\, \de t.$$
As $\partial \Psi_{\beta_\epsi} \to \partial \Psi_{\beta}$ in the
graph sense in $L^2(0,T;H)$, see \eqref{eq:attouch2}, this last
convergence and convergences \eqref{eq:uweake} and \eqref{eq:gammaweake}
allow us to apply again the extension \eqref{eq:attouch} of~\cite[Proposition~3.59, p.~361]{Attouch} and obtain $w_\nu \in \partial \Psi_{\beta}(u_\nu)$, that is,
$w_\nu \in \beta(u_\nu)$ a.e.~in $Q$. Using  the linear maximal monotonicity and $m$-accretivity of $A$ and $\mathcal B$ with $X = V^*$ and $p = 2$ (see \S \ref{sec:main14}), respectively,  one can pass to the limit as $\epsi \to 0$ in equation
\eqref{eq:prob5e} as well as in bounds 
\eqref{eq:bounds}, \eqref{eq:bound3}, and
\eqref{eq:bound4}--\eqref{eq:bound5}, eventually
obtaining the following.

\begin{proposition}[Well-posedness of the regularized problem for
  $\nu\in (0,1)$]\label{cor:reg2} 
Under assumptions  \eqref{Azero}--\eqref{eq:init},  for all $\nu\in (0,1)$,  set $v_{0\nu}:=\nu u_0+v_0$. Then, there exists
  a triplet
$(u_\nu,v_\nu,w_\nu)\in  L^2(0,T;V\times H \times H)$ 
  such that  $\kappa \ast (v_\nu - v_{0 \nu}) \in W^{1.2}(0,T; V^*)$ along with $(\kappa \ast (v_\nu - v_{0 \nu}))(0)=0$ and  $w_\nu \in L^{ q}(Q)$,  and 
  \begin{align}
    &\partial_t (\kappa \ast (v_\nu - v_{0\nu}) )
       + A u_\nu + w_\nu =G(u_\nu) \quad \text{in} 
      \ V^*, \, \text{a.e.~in} \ (0,T), 
    \label{eq:prob5n}\\
    & v_\nu \in \alpha_\nu(u_\nu) \quad \text{a.e.~in} \ Q, 
    \label{eq:prob2n}\\
    &w_\nu \in \beta(u_\nu)\quad \text{a.e.~in}   \  Q.
    \label{eq:prob4n}
  \end{align}
Moreover, there exists a constant $C_2>0$ independent of $\nu$  
such that
\begin{align}
  &\| u_{\nu }\|_{L^2(0,T;V)}+\| v_{\nu }\|_{L^2(0,T;H)}+ \|
    w_{\nu }\|_{ L^{ q}(Q) }\nonumber\\
  &\quad + \| \partial_t (\kappa\ast (v_{\nu} - v_{0\nu}))\|_{L^2(0,T;V^*)} 
  \leq  C_2.\label{eq:boundsn}
\end{align}
\end{proposition}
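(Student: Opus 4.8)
The plan is to obtain $(u_\nu,v_\nu,w_\nu)$ as a limit, along $\epsi\to0$ with $\nu\in(0,1)$ held fixed, of the solutions $(u_\ne,v_\ne,w_\ne)$ of the fully regularized problem \eqref{eq:prob5e}--\eqref{eq:prob4e} provided by Proposition \ref{cor:reg}. First I would collect the a-priori bounds \eqref{eq:bounds}, \eqref{eq:bound3}, \eqref{eq:bound4}, and \eqref{eq:bound5}, all of which have constants independent of $\epsi$, and extract a (not relabeled) subsequence realizing the weak convergences \eqref{eq:uweake}--\eqref{eq:xiweake}. The bound \eqref{eq:bound5} on $\partial_t(\kappa\ast(v_\ne-v_{0\ne}))$ in $L^2(0,T;V^*)$ then lets me control the $V^*$-valued time increments of $v_\ne$ exactly as in the derivation of \eqref{eq:wvar} in Section \ref{sec:limit}, yielding \eqref{eq:vvar}; the Aubin--Lions lemma upgrades the weak convergence of $v_\ne$ to the strong convergence \eqref{eq:vstronge} in $L^2(0,T;V^*)$.

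The core of the argument is the identification of the two nonlinearities in the limit. For $\alpha_\nu$, I would combine the strong convergence \eqref{eq:vstronge} with the weak convergence \eqref{eq:uweake} to pass to the limit in the duality pairing and obtain \eqref{eq:tolie}, namely $\int_0^T(v_\ne,u_\ne)\,\de t\to\int_0^T(v_\nu,u_\nu)\,\de t$. Since $\partial\Psi_{\alpha_\ne}\to\partial\Psi_{\alpha_\nu}$ in the graph sense, see \eqref{eq:attouch2}, the implication \eqref{eq:attouch} then gives $v_\nu\in\partial\Psi_{\alpha_\nu}(u_\nu)$, that is, $v_\nu\in\alpha_\nu(u_\nu)$ a.e.~in $Q$.

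The delicate point, which I expect to be the main obstacle, is the \emph{strong} convergence of $u_\ne$ in $L^2(0,T;H)$, needed both to pass to the limit in the Lipschitz term $G(u_\ne)$ and, via the graph convergence $\partial\Psi_{\beta_\epsi}\to\partial\Psi_\beta$, to identify $w_\nu$. Here I would exploit the additive structure $\alpha_\ne=\nu\,{\rm id}_\R+\alpha_\epsi$, whence $\Psi_{\alpha_\ne}=\tfrac{\nu}{2}\|\cdot\|^2_{L^2(0,T;H)}+\Psi_{\alpha_\epsi}$. Using the Mosco convergences $\Psi_{\alpha_\ne}\to\Psi_{\alpha_\nu}$ and $\Psi_{\alpha_\epsi}\to\Psi_\alpha$ together with \eqref{eq:tolie}, one shows both $\Psi_{\alpha_\ne}(u_\ne)\to\Psi_{\alpha_\nu}(u_\nu)$ and $\Psi_{\alpha_\epsi}(u_\ne)\to\Psi_\alpha(u_\nu)$; subtracting and taking the $\limsup$ yields $\limsup_{\epsi\to0}\tfrac{\nu}{2}\|u_\ne\|^2_{L^2(0,T;H)}\le\tfrac{\nu}{2}\|u_\nu\|^2_{L^2(0,T;H)}$, which with \eqref{eq:uweake} and the uniform convexity of $\|\cdot\|_{L^2(0,T;H)}$ forces \eqref{eq:ustronge}. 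With this in hand, \eqref{ass_lip0} gives $G(u_\ne)\to G(u_\nu)$ in $L^2(0,T;H)$ and $\int_0^T(w_\ne,u_\ne)\,\de t\to\int_0^T(w_\nu,u_\nu)\,\de t$; since $w_\ne\weakto w_\nu$ in $L^q(Q)\hookrightarrow L^2(0,T;H)$, applying \eqref{eq:attouch} to $\beta$ produces $w_\nu\in\beta(u_\nu)$ a.e.~in $Q$.

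It remains to pass to the limit in the equation and in the bounds. Since $A\colon V\to V^*$ is linear and bounded, \eqref{eq:uweake} gives $Au_\ne\weakto Au_\nu$ in $L^2(0,T;V^*)$; combining this with \eqref{eq:xiweake}, the identifications above, and $v_{0\ne}=\nu u_{0\epsi}+v_{0\epsi}\to\nu u_0+v_0=v_{0\nu}$ in $H$, I would pass to the limit in \eqref{eq:prob5e} to recover \eqref{eq:prob5n}. The linearity and $m$-accretivity of $\mathcal B$ with $X=V^*$ and $p=2$ (see \S\ref{sec:main14}) then yield $\kappa\ast(v_\nu-v_{0\nu})\in W^{1,2}(0,T;V^*)$ with $(\kappa\ast(v_\nu-v_{0\nu}))(0)=0$ and $\xi_\nu=\partial_t(\kappa\ast(v_\nu-v_{0\nu}))$. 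Finally, the uniform bound \eqref{eq:boundsn} follows by weak lower semicontinuity of the relevant norms applied to \eqref{eq:bounds}, \eqref{eq:bound3}, \eqref{eq:bound4}, and \eqref{eq:bound5}, whose constants are independent of $\nu$ as well.
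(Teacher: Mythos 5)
Your proposal is correct and follows essentially the same route as the paper's own proof in Section \ref{sec:limit2}: the same compactness/Aubin--Lions step, the same identification of $v_\nu\in\alpha_\nu(u_\nu)$ via \eqref{eq:attouch2} and \eqref{eq:attouch}, the same key device of splitting $\Psi_{\alpha_\ne}=\tfrac{\nu}{2}\|\cdot\|^2_{L^2(0,T;H)}+\Psi_{\alpha_\epsi}$ and passing to the limit in both energies to force \eqref{eq:ustronge} by uniform convexity, and the same identification of $w_\nu\in\beta(u_\nu)$ and limit passage in the equation.
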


\subsection{Passage to the limit as $\nu\to 0$}\label{sec:limit3}

We now pass to the limit as $\nu \to 0$  for the  solutions
$(u_\nu,v_\nu,w_\nu)$ to problem \eqref{eq:prob5n}--\eqref{eq:prob4n}  for  proving the existence
of a solution to \eqref{eq:prob5}--\eqref{eq:prob4}.  
 Owing  to bounds
\eqref{eq:boundsn} we can extract  a (not relabeled) subsequence   
such that
\begin{align}
  &u_{\nu } \weakto u\quad \text{in} \ L^2(0,T; V),\label{eq:uweakee}\\
   &v_{\nu } \weakto v \quad \text{in} \  L^2(0,T; H), \label{eq:vweakee}\\ 
    &w_\nu \weakto w    \quad\text{in} \  L^{ q}(Q)
      , \label{eq:gammaweakee}\\
   &\partial_t(\kappa\ast (v_{\nu } - v_{0\nu })) \weakto
  \xi \quad \text{in} \ \ L^2(0,T;V^*) .  \label{eq:xiweakee}
\end{align}
Again,  thanks to the bound on  $ \partial_t (\kappa\ast (v_{\nu}
- v_{0\nu}))$ in ${L^2(0,T;V^*)}$  from \eqref{eq:boundsn}  we can control the time increments of $v_\nu$ as 
 \begin{equation*}
    \limsup_{h \to 0} \sup_{\nu \in (0,1)}\int_0^{T-h} \|v_{\nu}(t+h)-
  v_{\nu }(t)\|^2_{V^*}\, \de t =0,
\end{equation*}
so that, possibly extracting again without relabeling,  the Aubin--Lions lemma gives 
\begin{equation}
  \label{eq:vstrongn}
v_{\nu } \to v \quad \text{in} \ L^2(0,T; V^*).
\end{equation}
By reproducing the argument in \eqref{eq:tolie} we obtain
\begin{equation}
  \int_0^T (v_\nu,u_\nu)\, \de t \to \int_0^T ( v,u)\, \de
t.\label{eq:tolin}
\end{equation}
Using the convergence $\partial \Psi_{\alpha_\nu} \to \partial
\Psi_\alpha$ in the graph sense in $L^2(0,T;H)$, see \eqref{eq:attouch3}, the extension \eqref{eq:attouch} of~\cite[Proposition~3.59, p.~361]{Attouch} guarantees that $v\in \partial \Psi_\alpha(u)$, which
is nothing but \eqref{eq:prob2}.

A second consequence of \eqref{eq:tolin}, together with the bound \eqref{eq:boundsn} and convergence \eqref{eq:uweakee},
is that
\begin{align*}
  &\Psi_{\alpha}(u) \leq
    \liminf_{\nu \to 0} \Psi_{\alpha}(u_{\nu}) \leq  \limsup_{\nu \to
    0}  \Psi_{ \alpha}(u_{\nu}) \leq \limsup_{\nu \to
    0} \Psi_{\alpha_\nu}(u_{\nu}) \\
  &\quad\leq  \limsup_{\nu \to 0}\left( \Psi_{\alpha_\nu}(u) + \int_0^T (  v_{\nu },  u_{\nu }-u )\, \de t
    \right) \\
  &\quad =\limsup_{\nu \to 0}\left( \Psi_{\alpha}(u) +  \frac \nu 2  \|u\|^2_{L^2(0,T;H)}+ \int_0^T \left(  v_{\nu },  u_{\nu }-u \right) \, \de t
    \right)  =
    \Psi_{\alpha}(u).
\end{align*}
This proves that $\Psi_{\alpha}(u_{\nu })\to \Psi_{\alpha}(u)$.
As $\haz \alpha$ is strictly convex by \eqref{ass2},  \cite[Theorem~3 (ii)]{Visintin} and convergence \eqref{eq:uweakee}  ensure that 
\begin{equation*}
  u_{\nu} \to u \quad \text{in} \ L^{ \sigma}(0,T;H)
\end{equation*}
for any $ \sigma \in [1,2)$.  In particular,  putting $\sigma=q' \in (1,2)$ (indeed, $q > 2$),  we obtain 
$$\int_0^T (w_\nu,u_\nu)\, \de t \to \int_0^T (w,u)\,
\de t.$$
 As convergences \eqref{eq:uweakee} and \eqref{eq:gammaweakee}
 also  imply weak convergence in $L^2(0,T;H)$ the latter  
allows us to apply~\cite[Proposition~2.5, p.~27]{Brezis73} and obtain the inclusion
$w \in \partial \Psi_\beta(u)$,
that is \eqref{eq:prob4}.
Moreover,  we deduce that  $G(u_\nu) \to G(u)$ in $L^{  \sigma  }(0,T;H)$ for every $  \sigma  \in [1,2)$  due to \eqref{ass_lip0}.  Using the linear maximal monotonicity and $m$-accretivity of $A$ and $\mathcal B$ with $X = V^*$ and $p = 2$ (see \S \ref{sec:main14}), respectively, 
one can pass to the limit in \eqref{eq:prob5n} as $\epsi \to 0$  and obtain
\eqref{eq:prob5}. This concludes the proof of Theorem \ref{thm:main}.

\medskip
\section*{Acknowledgments} 

This research was funded in whole or in part by the Austrian Science Fund (FWF) projects 10.55776/F65, 10.55776/I5149, 10.55776/P32788, as well as by the OeAD-WTZ project CZ 09/2023. For open-access purposes, the authors have applied a CC BY public copyright license to any author-accepted manuscript version arising from this submission.  G.A.~is supported by JSPS KAKENHI Grant Numbers JP24H00184, JP21KK0044, JP21K18581, JP20H01812 and JP20H00117.  Part of this research was conducted during a visit to the Mathematical Institute at Tohoku University, whose warm hospitality is gratefully acknowledged.

\bibliographystyle{plain}

\def\cprime{$'$}

\end{document}